\newtheorem{theorem}{Theorem}[section]
\newtheorem{proposition}[theorem]{Proposition}
\newtheorem{lemma}[theorem]{Lemma}
\newtheorem{definition}[theorem]{Definition}
\newtheorem{remark}[theorem]{Remark}
\newtheorem{example}[theorem]{Example}
\numberwithin{equation}{section}
\begin{document}

\baselineskip=16pt

\title[Segre invariant, a stratification of the moduli space of coherent systems]{Segre invariant and a stratification of the moduli space of coherent systems}

\author{L. Roa-Leguizam\'on}

\address{Centro de Investigaci\'on en Matem\'aticas, A.C. (CIMAT), Apdo. Postal 402,C.P. 36240, Guanajuato, Gto, M\'exico.}

\email{leonardo.roa@cimat.mx}







\subjclass[2010]{14H60, 14D20}

\keywords{algebraic curves, moduli of vector bundles, coherent systems, Segre invariant, stratification of the moduli space.}

\date{\today}

\maketitle

\begin{abstract}
The aim of this paper is to generalize the $m-$Segre invariant for vector bundles to coherent systems.  Let $X$ be a non-singular irreducible complex projective curve of genus $g$ over $\mathbb{C}$ and $(E,V)$ be a coherent system on $X$ of type $(n,d,k)$. For any pair of integers $m, t$, $0 < m < n$, $0 \leq t \leq k$ we define the $(m,t)-$Segre invariant, denoted by $S^\alpha_{m,t}$ and show that $S^\alpha_{m,t}$ induces a semicontinuous function on the families of coherent systems.  Thus, $S^\alpha_{m,t}$ gives a stratification of the moduli space $G(\alpha;n,d,k)$ of $\alpha-$stable coherent systems of type $(n,d,k)$ on $X$ into locally closed subvarieties $G(\alpha;n,d,k;m,t;s)$ according to the value  $s$ of $S^\alpha_{m,t}$.   We study the stratification, determine conditions under which the different strata are non-empty and compute their dimension.
\end{abstract}

\section{Introduction}\label{intro}

Let $X$ be a non-singular irreducible complex projective curve of genus $g$ over $\mathbb{C}$.  The $m-$Segre invariant for vector bundles was first introduced by Lange and Narasimhan in \cite{Lange-Narasimhan} for vector bundles of rank  $n=2$. Then, it was generalized by Brambila-Paz and Lange in \cite{Brambila-Lange} for vector bundles of rank $n \geq 2$ (see also \cite{Russo-Teixidor}).  The $m-$Segre invariant was used to give a stratification of the moduli space $M(n,d)$ of stable vector bundles of rank $n$ and degree $d$ in order to determine topological and geometric properties of $M(n,d)$ (see for instance \cite{Popa}, \cite{Popa-Mike}, \cite{Lange-Newstead1}).  Moreover, it has been generalized to study other moduli spaces (see  \cite{Bhosle-Biswas}, \cite{Choe-Hitching1}).

The aim of this  paper is to generalize  the $m-$Segre invariant for vector bundles to coherent systems.  A coherent  system $(E,V)$ on X consists of a holomorphic vector bundle $E$ on $X$ and a  subspace $V$ of the space of sections $H^0(X,E)$.  Associated with the coherent systems, there is a notion of stability which depends on a real parameter $\alpha$. This notion allows the construction of the moduli space $G(\alpha;n,d,k)$ of $\alpha-$stable coherent systems of  type $(n,d,k)$ and thus leads to a family of moduli spaces.  For a further treatment of the subject see \cite{Newstead}, \cite{Bradlow-Garcia-Prada-Munoz-Newstead1} and \cite{Bradlow}.

Let $\alpha >0$ and $(E,V)$ be a coherent system of type $(n,d,k)$. For  any pair of integers $m, t$, $0 < m < n$, $0 \leq t \leq k$ we define the $(m,t)-$Segre invariant (see Definition \ref{DefiSegreInvariant}) as,
\[S^\alpha_{m,t}(E,V) := (mn)\min \{\mu_\alpha(E,V) - \mu_\alpha(F,W)\}  \]
where the minimum is taken over all principal subsystems (see Remark \ref{basic}) $(F,W)$ of type $(m,d_F,t)$  of $(E,V)$, and $\mu_\alpha$ denote the $\alpha-$slope of coherent systems.   

Using similar techniques as Maruyama in  \cite{Maruyama}, \cite{Maruyama1} and \cite{Maruyama2} we show that the $(m,t)-$Segre invariant  induces a function (called the $(m,t)-$Segre function) on the families of coherent systems and prove 
our first result (see Theorem \ref{Theosemicontinuous}),

\begin{theorem} \label{1theorem}
The $(m,t)-$Segre function is  lower semicontinuous.
\end{theorem}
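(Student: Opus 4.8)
The plan is to adapt Maruyama's method for Harder--Narasimhan--type stratifications, now also carrying along the spaces of sections. Let $T$ be a scheme of finite type and $(\mathcal E,\mathcal V)$ a family of coherent systems of type $(n,d,k)$ parametrized by $T$, with fibre $(E_s,V_s)$ at $s\in T$. Since the $\alpha$-slope $\mu_\alpha(E_s,V_s)=(d+\alpha k)/n$ is constant on $T$, an immediate computation from the definition gives
\[
S^{\alpha}_{m,t}(E_s,V_s)=m(d+\alpha k)-n\alpha t-n\,\sigma(E_s,V_s),\qquad
\sigma(E_s,V_s):=\max\{\,d_F \mid (F,W)\ \text{a principal subsystem of type}\ (m,d_F,t)\,\},
\]
and this maximum is finite because $d_F\le m\,\mu_{\max}(E_s)$ and $\mu_{\max}(E_s)$ is bounded above as $s$ ranges over $T$. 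Hence the $(m,t)$-Segre function is lower semicontinuous if and only if $s\mapsto\sigma(E_s,V_s)$ is upper semicontinuous, that is, if and only if for every integer $e$ the set
\[
T_{\ge e}:=\{\,s\in T \mid (E_s,V_s)\ \text{admits a principal subsystem}\ (F,W)\ \text{of type}\ (m,d_F,t)\ \text{with}\ d_F\ge e\,\}
\]
is closed in $T$; for $e$ beyond the uniform bound on $\sigma$ this set is empty, so only finitely many $e$ are relevant. Since passing to the saturation of $F$ leaves its rank unchanged and cannot decrease either its degree or its space of sections, $T_{\ge e}$ does not change if in its definition $F$ is allowed to be an arbitrary rank-$m$ subsheaf of $E_s$.

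Fix such an $e$. By Maruyama's boundedness theorems, the rank-$m$ subsheaves $F\subseteq E_s$ with $e\le d_F\le m\,\mu_{\max}(E_s)$, as $s$ varies over $T$, form a bounded family; equivalently the quotients $E_s/F$ have rank $n-m$ and degree in a finite set, and therefore are parametrized by finitely many relative Quot schemes, whose union $\rho\colon\mathcal R_e\to T$ is projective over $T$, in particular proper. On $X\times\mathcal R_e$ we have the universal exact sequence
\[
0\longrightarrow\mathcal F\longrightarrow\rho_X^{*}\mathcal E\longrightarrow\mathcal Q\longrightarrow 0,\qquad \rho_X\colon X\times\mathcal R_e\to X\times T,
\]
with $\mathcal Q$, and hence also $\mathcal F$, flat over $\mathcal R_e$; the fibre $\mathcal F_r$ is a rank-$m$ subsheaf of $E_{\rho(r)}$ of degree $\ge e$. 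Let $\mathrm{Gr}_T(t,\mathcal V)\to T$ be the Grassmann bundle of rank-$t$ subbundles of $\mathcal V$, with tautological subbundle $\mathcal S$, put $B:=\mathcal R_e\times_T\mathrm{Gr}_T(t,\mathcal V)$ and $\pi\colon B\to T$; then $\pi$ is proper, and a point of $B$ over $s$ is a pair $(r,[W])$ with $\mathcal F_r\subseteq E_s$ and $W\subseteq V_s$ a $t$-dimensional subspace.

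On $X\times B$ let $\mathcal S_X$ and $\mathcal Q_B$ denote the pullbacks of $\mathcal S$ and $\mathcal Q$. Composing the evaluation morphism of $\mathcal V$ with $\mathcal E\twoheadrightarrow\mathcal Q$ and restricting to $\mathcal S_X$ produces a morphism $\theta\colon\mathcal S_X\to\mathcal Q_B$ whose fibre at $(r,[W])$ is the composite $W\otimes\mathcal O_X\to E_s\to E_s/\mathcal F_r$; this fibre is zero exactly when $W\subseteq H^{0}(\mathcal F_r)$, i.e. when $W\subseteq V_s\cap H^{0}(\mathcal F_r)$. The locus $Z:=\{\,b\in B \mid \theta_b=0\,\}$ is closed: writing $p\colon X\times B\to B$ for the projection and choosing $N\gg 0$ (uniformly, by boundedness) so that $p_*\big(\mathcal S_X^{\vee}\otimes\mathcal Q_B(N)\big)$ is locally free with its formation commuting with base change, the twist of $\theta$ by $\mathcal O_X(N)$ induces a section of this locally free sheaf on $B$ whose zero scheme is precisely $Z$. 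Finally $\pi(Z)=T_{\ge e}$: a point of $Z$ lying over $s$ provides a rank-$m$ subsheaf $\mathcal F_r\subseteq E_s$ of degree $\ge e$ with $\dim\big(V_s\cap H^{0}(\mathcal F_r)\big)\ge t$, and hence, after passing to the saturation and choosing a $t$-dimensional $W\subseteq V_s\cap H^{0}(\mathcal F_r)$, a principal subsystem of $(E_s,V_s)$ of type $(m,d_F,t)$ with $d_F\ge e$; conversely, any such subsystem arises in this way. Since $\pi$ is proper, $T_{\ge e}=\pi(Z)$ is closed, and the theorem follows.

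The two substantive points are the boundedness of the rank-$m$ subsheaves of bounded degree in the family, which is exactly where Maruyama's techniques enter and which makes $\mathcal R_e$ (and hence $B$) proper over $T$, and the fact that \emph{a morphism of sheaves vanishing on a fibre} is a closed condition, which is what allows the constraint $W\subseteq H^{0}(F)$ to be imposed scheme-theoretically. Everything else is the standard principle that the image of a proper scheme under a morphism is closed.
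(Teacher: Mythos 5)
Your overall architecture --- reducing lower semicontinuity of $S^\alpha_{m,t}$ to upper semicontinuity of the maximal degree $\sigma$, bounding the relevant subsheaves, parametrizing them by a scheme proper over $T$, cutting out the section condition as a closed vanishing locus, and pushing forward --- is sound and close in spirit to the paper's Maruyama-style argument (the paper instead shows that ``not of cotype'' is constructible and stable under specialization over a discrete valuation ring, which amounts to the same properness input). However, there is a genuine gap at the step where you assert $\pi(Z)=T_{\ge e}$. The invariant $S^\alpha_{m,t}$ is a minimum over \emph{principal} subsystems $(F,W)$ of type $(m,d_F,t)$, i.e.\ those with $W=V\cap H^0(X,F)$, hence with $\dim\bigl(V\cap H^0(X,F)\bigr)=t$ \emph{exactly}. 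Your locus $Z$ only records the closed condition $W\subseteq V_s\cap H^0(\mathcal F_r)$, so $\pi(Z)$ is the set of $s$ admitting a rank-$m$ subsheaf $F$ of degree $\ge e$ with $\dim\bigl(V_s\cap H^0(X,F)\bigr)\ge t$. If that dimension is strictly greater than $t$, choosing a $t$-dimensional $W\subseteq V_s\cap H^0(X,F)$ yields a subsystem of type $(m,d_F,t)$ that is \emph{not} principal, hence does not lie in $P_{m,t}(E_s,V_s)$ and does not enter the minimum defining $S^\alpha_{m,t}$. (The same slippage occurs when you pass to saturations: saturating can only enlarge $V\cap H^0(X,F)$, which preserves ``$\ge t$'' but can destroy ``$=t$''.) What you have actually proved is the upper semicontinuity of $\widetilde\sigma(E_s,V_s)=\max\{d_F:\mathrm{rk}\,F=m,\ \dim(V_s\cap H^0(X,F))\ge t\}$, i.e.\ the lower semicontinuity of the variant of the Segre invariant whose minimum runs over all, not just principal, subsystems of type $(m,\cdot,t)$. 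These invariants genuinely differ --- the paper flags exactly this for $t=0$, where $P_{m,0}(E,V)$ excludes every rank-$m$ subbundle having a nonzero section in $V$ --- and since $T_{\ge e}\subseteq\pi(Z)$ may be strict, closedness of $\pi(Z)$ does not give closedness of $T_{\ge e}$. Note also that ``exactly $t$'' is the intersection of a closed with an open condition, and images of locally closed sets under proper maps need not be closed, so the gap is not repaired by simply intersecting $Z$ with another locus.

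The paper avoids this by dualizing: a principal subsystem of type $(m,d_F,t)$ is precisely the kernel of a quotient coherent system of type $(n-m,d-d_F,k-t)$, and the paper parametrizes \emph{quotients} of fixed type by He's Quot scheme for coherent systems, so the dimension $k-t$ of the quotient of $V$ --- and therefore the exact dimension $t$ of $W=V\cap H^0(X,F)$ --- is locked in by flatness throughout the family. If you replace $\mathcal R_e\times_T\mathrm{Gr}_T(t,\mathcal V)$ by the relative Quot scheme of quotient coherent systems of type $(n-m,d',k-t)$ and rerun your properness-plus-closed-image argument there, the rest of your proof goes through essentially unchanged.
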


As consequence of Theorem \ref{1theorem}, the $(m,t)-$Segre invariant yields a stratification of the moduli space $G(\alpha;n,d,k)$ into locally closed subvarieties which we denote as
\[G(\alpha;n,d,k;m,t;s) := \{(E,V) \in G(\alpha;n,d,k) : S^\alpha_{m,t}(E,V) = s\}\]
according to the value  $s$ of $S^\alpha_{m,t}$.  We show a bound for the possible values that can take $s$ (see Proposition \ref{acotainvariante}), determine certain values of $m$, $t$ and $s$ under which  the stratum $G(\alpha;n,d,k;m,t;s)$ is non-empty  (see Theorem \ref{Theoremnonempty}) and compute a bound of its dimension,

\begin{theorem}
Let $\alpha= \frac{p}{q} \in \mathbb{Q}^+$ and $n \geq 2$, $d > 0$, $k \geq 1$ be integer numbers.   Suppose that there exist $n_1, n_2, d_1, d_2 >0$, $t_1, t_2  \geq 0$ integer numbers such that
\begin{equation*} 
n_1n_2(g-1) -d_1n_2 + d_2n_1 + t_2n_1(g-1) - t_1t_2 > 0,
\end{equation*}
\begin{equation*} 
q(n_1d_2-n_2d_1)+p(n_1t_2-n_2t_1) = 1.
\end{equation*}
and  the moduli spaces $G(\alpha;n_1,d_1,t_1)$ and $G(\alpha;n_2,d_2,t_2)$ are non-empty.  Then, the stratum 
\[G(\alpha;n,d,k;n_1,t_1;1/q)\]
is non-empty, where $n= n_1+n_2$, $d= d_1+d_2$ and $k= t_1+t_2$.
\end{theorem}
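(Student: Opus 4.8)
The plan is to build an $\alpha$-stable coherent system $(E,V)$ of type $(n,d,k)$ as an extension of $(E_2,V_2)$ by $(E_1,V_1)$, where $(E_i,V_i) \in G(\alpha;n_i,d_i,t_i)$, chosen so that $(E_1,V_1)$ is forced to be (up to the obvious ambiguity) the unique principal subsystem of type $(n_1,d_1,t_1)$ computing $S^\alpha_{n_1,t_1}(E,V)$. The numerical hypothesis $q(n_1d_2-n_2d_1)+p(n_1t_2-n_2t_1)=1$ is precisely the statement that $mn\bigl(\mu_\alpha(E,V)-\mu_\alpha(E_1,V_1)\bigr)=1/q$ after clearing denominators, so \emph{any} such subextension realizes the value $s=1/q$; the inequality $n_1n_2(g-1)-d_1n_2+d_2n_1+t_2n_1(g-1)-t_1t_2>0$ will be the positivity of the relevant $\mathrm{Ext}^1$-group, guaranteeing that non-split extensions exist and that the generic such extension has $(E_1,V_1)$ as its \emph{only} destabilizing-to-slope-$1/q$ subsystem.

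**Key steps, in order.** First I would record the extension-theoretic setup for coherent systems: for $(E_1,V_1)$ of type $(n_1,d_1,t_1)$ and $(E_2,V_2)$ of type $(n_2,d_2,t_2)$, extensions $0\to(E_1,V_1)\to(E,V)\to(E_2,V_2)\to 0$ are classified by a hypercohomology group $\mathbb{H}^1$ whose dimension I would compute via the standard exact sequence (as in Bradlow--García-Prada--Muñoz--Newstead) to be $n_1n_2(g-1)-d_1n_2+d_2n_1+t_2n_1(g-1)-t_1t_2$ plus correction terms that vanish generically; the hypothesis says this is $>0$, so a non-split extension exists. Second, I would verify that for such $(E,V)$, the $\alpha$-slope satisfies $\mu_\alpha(E,V)-\mu_\alpha(E_1,V_1) = \frac{1}{mn}\cdot\frac{1}{q}$ by a direct slope computation using $\mu_\alpha(E,V)=\frac{d+\alpha k}{n}$ and $n=n_1+n_2$, $d=d_1+d_2$, $k=t_1+t_2$; hence $S^\alpha_{n_1,t_1}(E,V)\le 1/q$. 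Third — the crux — I would show that a \emph{generic} extension is $\alpha$-stable and has $S^\alpha_{n_1,t_1}(E,V)=1/q$ exactly: $\alpha$-stability follows because any subsystem of $(E,V)$ gives, via the extension, subsystems of $(E_1,V_1)$ and $(E_2,V_2)$ whose slopes are controlled by the $\alpha$-stability of the factors, and the value $1/q$ being the smallest positive value of the relevant $\mathbb{Z}$-valued linear form (by the $\gcd$-type equation with $\alpha=p/q$) forces any competing principal subsystem of type $(n_1,d_F,t_1)$ to give $\ge 1/q$. Finally, I would invoke Theorem \ref{Theoremnonempty}'s style of argument (or the openness of $\alpha$-stability plus semicontinuity from Theorem \ref{1theorem}) to conclude the locus so constructed lies in $G(\alpha;n,d,k;n_1,t_1;1/q)$, which is therefore non-empty.

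**Main obstacle.** The delicate point is step three: proving that the generic extension has \emph{no} principal subsystem of type $(n_1, d_F, t_1)$ with $mn(\mu_\alpha(E,V)-\mu_\alpha(F,W)) < 1/q$ — equivalently $=0$, i.e.\ a sub-coherent-system of the same $\alpha$-slope. One must rule out a splitting-type sub-object. The standard technique is a dimension count: the family of extensions has dimension equal to $\mathbb{H}^1$ minus automorphisms, while the locus of extensions admitting an unwanted destabilizing subsystem $(F,W)$ is swept out by a bounded family of such $(F,W)$ together with the sub-extensions they fit into, and one checks this locus has strictly smaller dimension. This requires a boundedness statement for the possible $(F,W)$ (which follows since their $\alpha$-slope is pinched between that of $(E_1,V_1)$ and $(E,V)$ and the topological type is fixed) and the inequality hypothesis to ensure the ambient $\mathbb{H}^1$ is large enough to leave room. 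I would also need to handle the case where $(F,W)$ maps isomorphically onto a sub-object of $(E_2,V_2)$ of slope $>\mu_\alpha(E,V)$, which is excluded by $\alpha$-semistability of $(E_2,V_2)$, and the mixed case, which the positivity of $\mathrm{Ext}^1$ handles by a general-position argument.
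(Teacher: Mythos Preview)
Your overall architecture---build a non-trivial extension of $\alpha$-stable systems and verify that the resulting $(E,V)$ lies in the desired stratum---is exactly what the paper does. You also correctly identify the integer-minimality observation: for $\alpha=p/q$ the quantity $qnm\bigl(\mu_\alpha(E,V)-\mu_\alpha(F',W')\bigr)$ is an integer for every subsystem $(F',W')$ of type $(n_1,d_{F'},t_1)$, so once $(E,V)$ is $\alpha$-stable this integer is $\ge 1$ and hence $S^\alpha_{n_1,t_1}(E,V)\ge 1/q$, with equality realized by $(E_1,V_1)$. That is precisely the paper's argument for maximality (its Theorem~\ref{Theorem1}).

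Where your proposal diverges is in the treatment of $\alpha$-stability of $(E,V)$, which you flag as the ``main obstacle'' and propose to handle by a genericity-plus-dimension-count argument over the family of extensions. This is unnecessary, and in fact the paper avoids it entirely. Its Theorem~\ref{Theorem2} proves by a direct case analysis that \emph{every} non-trivial extension
\[
0\to (F,W)\to (E,V)\to (G,Z)\to 0
\]
with $(F,W),(G,Z)$ both $\alpha$-stable and satisfying the numerical condition $q(n_1d_2-n_2d_1)+p(n_1t_2-n_2t_1)=1$ is already $\alpha$-stable. The analysis takes an arbitrary subsystem $(E',V')\subset(E,V)$, forms its kernel $(F_1,W_1)\subset(F,W)$ and image $(G_1,Z_1)\subset(G,Z)$ under the quotient map, and splits into five cases according to whether these are zero, proper, or everything; in each case the $\alpha$-stability of the factors together with the identities $\mu_\alpha(F,W)=\mu_\alpha(E,V)-\tfrac{1}{qnm}$ and $\mu_\alpha(G,Z)=\mu_\alpha(E,V)+\tfrac{1}{qnm'}$ (which are equivalent reformulations of the hypothesis $=1$) force $\mu_\alpha(E',V')<\mu_\alpha(E,V)$ strictly. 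No bounded-family or dimension-count machinery is invoked.

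Two smaller points. First, your claim that the $\mathrm{Ext}^1$ formula has ``correction terms that vanish generically'' is also stronger than needed: the formula $\dim\mathrm{Ext}^1=C_{21}+\dim\mathbb{H}^0_{21}+\dim\mathbb{H}^2_{21}$ already gives $\dim\mathrm{Ext}^1\ge C_{21}>0$ from the first displayed hypothesis, so a non-trivial extension exists for \emph{every} choice of $(E_i,V_i)\in G(\alpha;n_i,d_i,t_i)$. Second, the worry you raise about a subsystem with $\mu_\alpha$ equal to that of $(E,V)$ (``equivalently $=0$'') is exactly the strict-semistable scenario, and it is ruled out by the same case analysis above, not by a dimension estimate. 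In short: your plan works, but the obstacle you anticipate dissolves once you attempt the direct slope computation you sketch at the start of your step three.
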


The paper is organized as follows, Section 2 contains a brief summary of  the $m-$Segre invariant for vector bundles and relevant material on coherent systems.  In Section 3 we define the $(m,t)-$Segre invariant for coherent systems and  coherent systems of subtype $(a)$,   show some technical results which allows to prove  Theorem \ref{1theorem}.  In Section 4 we study the stratification of $G(\alpha;n,d,k)$, determine conditions under which the different strata are  non-empty and compute a bound of their dimension. In section 5 we consider coherent systems of type $(2,13,4)$ on a general curve of genus $6$.

Notation: For a vector bundle $E$ we shall denote by $rk_E$ the rank and by $d_E$ the degree.  We will denote by $\omega_X$ the canonical sheaf on $X$. 

\textbf{Acknowledgments:} The author gratefully acknowledges the many helpful suggestions of  Leticia Brambila-Paz and Herbert Lange during previous versions of the paper and for many stimulating conversations.  The author acknowledges the support of CONACYT Proj CB-251938.
\section{Review of Segre Invariant and Coherent Systems}

Let $X$ be a non-singular irreducible complex projective curve of genus $g$ over $\mathbb{C}$.  This section contains a brief summary  on the $m-$Segre invariant for vector bundles, for more details see \cite{Lange-Narasimhan} and \cite{Brambila-Lange}. We recall the main results that we will use on coherent systems for a further treatment of the subject see \cite{Bradlow} and \cite{Bradlow-Garcia-Prada-Munoz-Newstead1}.   

\subsection{The $m-$Segre invariant for vector bundles}

Let $E$ be a vector bundle  of rank $n$ and degree $d$  on $X$. For any integer $m$, $1 \leq m \leq n-1$ the \textit{$m-$Segre invariant} is defined as
\[S_m(E) := m d - n \max \{d_F\}\]
where the maximum is taken over all subbundles $F$ of rank $m$ of $E$.   The $m-$Segre invariant was first introduced  by Lange and Narasimhan in \cite{Lange-Narasimhan} for vector bundles of rank  $n=2$, then it was generalized by Brambila-Paz and Lange in \cite{Brambila-Lange} and Russo and Teixidor in \cite{Russo-Teixidor}  for vector bundles of rank $n \geq 2$.  Recall that the $slope$ of a vector bundle $E$ denoted by $\mu(E)$, is the quotient
\[\mu(E) := \frac{d}{n}.\]
So, the $m-$Segre invariant can be written as
\[S_m(E) = (nm) \min_{rank \,\, F = m} \{\mu(E)-\mu(F)\}.\]

Note that for a suitable vector bundle $E$, $S_k(E)$ may take arbitrarily negative values (for instance E to be a suitable direct sum of line bundles). However, Hirschowitz in \cite{Hirschowitz} gives an upper  bound
\[S_k(E) \leq m(n-m)(g-1)+(n-1).\]

The following theorem studies the behavior of $S_m$ over families of vector bundles.

\begin{theorem} \emph{\cite[Lemma 1.2]{LangeH}} \label{semicontinuousvectorbundles}
 Let $Y$ be a variety and $\mathcal{E}$ be a family of vector bundles of rank $n$ and degree $d$ parametrized by $Y$. For any integer $m$, $1 \leq m \leq n-1$ the $m-$Segre invariant defines functions 
 \begin{align*}
 S_m: Y & \longrightarrow \mathbb{Z} \\
 y & \longmapsto S_m(\mathcal{E}_y).
 \end{align*}
  The function $S_m$ is lower semicontinuous.
 \end{theorem}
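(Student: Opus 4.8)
The plan is to deduce the lower semicontinuity of $S_m$ from the upper semicontinuity of the maximal subbundle degree. Since $m$, $n$, $d$ are constant on $Y$ and
\[ S_m(\mathcal{E}_y) = md - n\, d_{\max}(y), \qquad d_{\max}(y) := \max\{ d_F : F \subset \mathcal{E}_y \text{ a subbundle of rank } m\}, \]
the function $S_m$ is lower semicontinuous if and only if $d_{\max}$ is upper semicontinuous, i.e. if and only if the set $Y_{\geq \delta} := \{ y \in Y : d_{\max}(y) \geq \delta\}$ is closed for every integer $\delta$. First I would note that $d_{\max}(y)$ is finite and bounded above uniformly in $y$: since $Y$ is of finite type the family $\mathcal{E}$ is bounded, so the slopes (hence degrees) of rank-$m$ subsheaves of the fibres are uniformly bounded above, and the saturation of any rank-$m$ subsheaf is a subbundle of the same rank and at least the same degree, so nothing is lost by passing between subsheaves and subbundles.

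Next I would translate the existence of a large subbundle into the language of quotients. A rank-$m$ subbundle of $\mathcal{E}_y$ of degree $\geq \delta$ exists if and only if $\mathcal{E}_y$ admits a rank-$m$ subsheaf of degree $\geq \delta$ (pass to the saturation), equivalently a coherent quotient $\mathcal{E}_y \twoheadrightarrow Q$ with $Q$ of rank $n-m$ and $\deg Q \leq d - \delta$ (take $Q = \mathcal{E}_y/F$, allowing $Q$ to have torsion). Thus
\[ Y_{\geq \delta} = \{ y \in Y : \mathcal{E}_y \text{ has a quotient of rank } n-m \text{ and degree } \leq d-\delta\}. \]

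The key step is then a properness argument on relative Quot schemes. By the boundedness above there is an integer $e_{\min}$, independent of $y$, with $\deg Q \geq e_{\min}$ for every rank-$(n-m)$ quotient $Q$ of any fibre; hence for fixed $\delta$ only the finitely many quotient degrees $e$ with $e_{\min} \leq e \leq d-\delta$ can occur. For each such $e$ I would form the relative Quot scheme $\mathrm{Quot}(\mathcal{E}/Y)$ parametrizing quotients whose Hilbert polynomial is that of a sheaf of rank $n-m$ and degree $e$. By Grothendieck's theorem this scheme is projective over $Y$, so its structure morphism to $Y$ is proper and its image is closed. Since $Y_{\geq \delta}$ is the finite union of these images, it is closed; this gives the upper semicontinuity of $d_{\max}$ and therefore the lower semicontinuity of $S_m$.

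The main obstacle is the boundedness input: one must guarantee that across the whole family only finitely many Hilbert polynomials of quotients are relevant, and that working with possibly non-saturated kernels (quotients with torsion) neither loses nor distorts the rank-$m$ condition. This is exactly where the finite-type hypothesis on $Y$ is essential, since it is what makes $\mathcal{E}$ a bounded family and yields the uniform bound $e_{\min}$. Once this uniform boundedness is in place, the remainder is the standard properness of the Quot scheme together with the closedness of images of proper morphisms.
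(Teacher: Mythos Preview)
Your argument is correct: reducing to upper semicontinuity of $d_{\max}$, bounding the relevant quotient degrees uniformly using boundedness of the family, and then invoking properness of the relative Quot scheme is the standard and valid route to this result. The only point where you should be slightly more careful is in the boundedness step: you want a uniform \emph{upper} bound on the degrees of rank-$m$ subsheaves of the fibres $\mathcal{E}_y$ as $y$ varies over $Y$. This follows, for instance, from the existence of a surjection $\mathcal{O}_{X}(-N)^{\oplus r}\twoheadrightarrow \mathcal{E}_y$ with $N,r$ independent of $y$ (a consequence of $Y$ being of finite type), which bounds $\mu_{\max}(\mathcal{E}_y)$ uniformly. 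Once that is stated, the rest of your argument goes through cleanly.

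As for comparison with the paper: the paper does \emph{not} supply its own proof of this theorem. It is quoted verbatim from \cite[Lemma~1.2]{LangeH} as background in the review section, with no argument given. So there is nothing in the paper to compare your proof against for this particular statement. What the paper actually proves is the analogue for coherent systems (Theorem~\ref{Theosemicontinuous}), and there it follows Maruyama's strategy rather than yours: it introduces the auxiliary notion of ``subtype~$(a)$'' for a tuple of rational numbers, shows that being of subtype~$(a)$ is an open condition on the base (via a boundedness lemma plus a constructibility-and-specialization argument on Quot schemes of coherent systems), and then writes the superlevel set of the Segre function as a union of such open sets. Your Quot-scheme-properness argument is more direct for the vector bundle case and avoids the subtype machinery; the paper's approach is set up so that it generalizes to the extra data carried by a coherent system.
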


Denote by $M(n,d)$ the moduli space of stable vector bundles of rank $n$ and degree $d$ on $X$.  By  Theorem \ref{semicontinuousvectorbundles} the function $S_m:M(n,d) \longrightarrow \mathbb{Z}$ gives a stratification of $M(n,d)$ into locally closed subvarieties 
\[M(n,d;m;s) : = \{E \in M(n,d) : S_m(E) = s\}\]
according to the value s of $S_m$.  

The stratification of $M(2,d)$  was studied in \cite{Lange-Narasimhan}.	There was shown that for $s > 0$, $s \equiv d$ $\mod 2$ the algebraic variety $M(2,d;s)$ is non-empty, irreducible and of dimension
\[dim \,\ M(2,d;s) = \begin{cases}
3g + s -2, \,\, & \text{if $s \leq g-2$} \\
4g-3, \,\, & \text{if $s \geq g-1$.}
\end{cases}\]

For the stratification of $M(n,d)$, it is shown in \cite{Brambila-Lange} that for $g \geq \frac{n+1}{2}$ and $0 < s \leq m(n-m)(g-1)+(n+1)$, $s \equiv md$ $\mod n$ the variety $M(n,d;m;s)$ is non-empty and it has an irreducible component $M^0(n,d;m;s)$ of dimension 
\[dim\,\, M^0(n,d;m;s) = \begin{cases}
(n^2+m^2-nm)(g-1)+s-1, \,\, & \text{if $s \leq m(n-m)(g-1)$} \\
n^2(g-1)+1, \,\,  & \text{if $s \geq m(n-m)(g-1)$.}
\end{cases}\]
The main difficulty in the study of the stratification of $M(n,d)$ is to show that the different varieties $M(n,d;m;s)$ are non-empty.  This was shown in \cite{LangeH1} and \cite{Hirschowitz}  for $s \geq m(n-m)(g-1)$ for the generic case and for $s \leq \frac{m(n-m)(g-1)}{\max \{m, n-m\}}$ in \cite{Ballico-Brambila-Russo}. Special cases were considered by  Russo and Teixidor in \cite{Russo-Teixidor}.

Such stratification has been used by many authors to get topological and geometric properties of $M(n,d)$ and the $m-$Segre invariant has been generalized to study others moduli spaces.  For instance, Popa in \cite{Popa} determined a bound for the $m-$Segre invariant, this result is applied to the study of generalized theta line bundles on the moduli space $M(n,d)$. Bhosle and Biswas in \cite{Bhosle-Biswas} defined an analogue of the $m-$Segre invariant for parabolic bundles  in order to study the moduli space $\mathcal{M}(n,d)$ of stable parabolic bundles of rank $n$ and degree $d$ with fixed parabolic structure at a finite set of distinct closed points of $X$. Similarly, Choe and Insong in \cite{Choe-Hitching1} studied the moduli space $M_{2n}$ of semistable symplectic bundles of rank $2n$ on $X$.

\subsection{Coherent systems}

A \textit{coherent system} or Brill-Noether pair on $X$ of type $(n,d,k)$ is  a pair $(E,V)$ where $E$ is a holomorphic vector bundle on $X$ of rank $n$ and degree $d$  and $V \subseteq H^0(X,E)$ is a subspace of dimension $k$.  
 
\begin{remark} \label{basic}\emph{
\begin{itemize}
\item A coherent subsystem of $(E,V)$ is a coherent system $(F,W)$ such that $F$ is a subbundle of $E$ and $W \subseteq V \cap H^0(X,F)$. The subsystem  $(F,W)$ is called \textit{principal} if $W = V \cap H^0(X,F)$. 
\item A quotient coherent system of $(E,V)$ is a coherent system $(G,Z)$ together with a homomorphism $\phi:(E,V) \longrightarrow (G,Z)$ such that $E \longmapsto G$ and $V \longmapsto Z$ are surjective.
\end{itemize}
   }
 \end{remark}

\begin{remark} \label{subdefineextension}\emph{ 
In general, a subsystem does not define a quotient system. However, any  principal subsystem $(F,W)$ of $(E,V)$ defines a corresponding quotient system $(G,Z)$ which fit in the exact sequence 
\[0 \longrightarrow (F,W) \longrightarrow (E,V) \longrightarrow (G,Z) \longrightarrow 0.\]
}
\end{remark}

\begin{definition} \label{Family1}\emph{
A \textit{family of coherent systems} of type $(n,d,k)$ on $X$ parametrized by a variety $T$ consists of a pair $(\mathcal{E}, \mathcal{V})$ where;
\begin{itemize}
\item $\mathcal{E}$ is a family of vector bundles on $X$ parametrized by $T$ such that $\mathcal{E}_t = \mathcal{E} _{X \times \{t\}}$ has degree $d$ and rank $n$ for all $t \in T$.
\item $\mathcal{V}$ is a locally free subsheaf of $p_{T*}\mathcal{E}$ of rank $k$ such that the fibers $\mathcal{V}_t$ map injectively to $H^0(X, \mathcal{E}_t)$ for all $t \in T$ where  $p_T$ denotes the canonical projection of $X \times T$ on $T$.
\end{itemize}}
\end{definition}

By  Serre-Grothendieck duality Theorem and by \cite[Lemme 4.9.]{LePotier} the vector spaces $Ext^1_{p}(\mathcal{E}, \omega_{X \times T/T})_t$ and $H^0(X,\mathcal{E}_t)$ are duals, hence it implies another way of stating the Definition \ref{Family1}.

\begin{definition}\label{Family2}\emph{
A  \textit{family of coherent systems} of type $(n,d,k)$ on $X$ parametrized by a variety $T$ consists of a pair $(\mathcal{E}, \Gamma)$ where;
\begin{itemize}
\item $\mathcal{E}$ is a family of vector bundles on $X$ parametrized by $T$ such that $\mathcal{E}_t = \mathcal{E} _{X \times \{t\}}$ has degree $d$ and rank $n$ for all $t \in T$.
\item $\Gamma$ is a locally free quotient sheaf of $Ext^1_{p_T}(\mathcal{E}, \omega_{X \times T/T})$ of rank $k$ on $T$ where $p_T$ denotes the canonical projection of $X \times T$ on $T$  and $\omega_{X \times T/T}= p^*_X(\omega_X)$  the relative canonical sheaf by the projection $p_T$.
\end{itemize}}
\end{definition} 

Associated to the coherent systems there is a notion of stability which depends on a real parameter $\alpha$. For a real number $\alpha$, the $\alpha-$slope of a coherent system $(E,V)$ of type $(n,d,k)$ is defined by
\[\mu_\alpha(E,V) := \frac{d}{n} + \alpha \frac{k}{n}.\]
We say, $(E,V)$ is $\alpha-$stable (resp. $\alpha-$semistable) if for all proper subsystems $(F,W)$,  
\[\mu_\alpha(F,W) < \mu_\alpha(E,V), \,\, \text{(resp. $\leq$)}.\]


The moduli space of $\alpha-$stable coherent systems of fixed type was  constructed  by Le Potier in \cite{LePotier}, by King and Newstead in \cite{King-Newstead} and by Ragavendra and Vishwanath in \cite{Ragavendra-Vishwanath} by the methods of Geometric Invariant Theory.    We shall denote the moduli space of $\alpha-$stable coherent systems of type $(n,d,k)$ by $G(\alpha;n,d,k)$. 

Necessary conditions for non-emptiness of $G(\alpha;n,d,k)$ are $d > 0$, $\alpha > 0$, $(n-k)d < \alpha$, for a discussion of recent progress on the non-emptiness problem we refer the reader to \cite{Newstead}. Now, we describe some well known facts about the moduli space of coherent systems.  

\begin{definition}\emph{
We say that $\alpha > 0$ is a virtual critical value if it is numerically possible to have a proper subsystem $(F,W)$ of type $(m,d_F,t)$ such that $\frac{t}{m} \neq \frac{k}{n}$ but $\mu_\alpha(E,V) =\mu_\alpha(F,W)$.  If there is a coherent system $(E,V)$ and a subsystem $(F,W)$ such that this actually holds, we say $\alpha$ is a critical value. We say $\alpha = 0$ is a critical value.}
\end{definition}

For numerical reasons for any $(n,d,k)$ there are finitely many critical values
\[0= \alpha_0 < \alpha_1 < \ldots < \alpha_L < \begin{cases}
\frac{d}{n-k},  \,\, &\text{if $k < n$}\\
\infty,  \,\, &\text{if $k \geq n$.}
\end{cases}\]
These induce a partition of the $\alpha-$range into a set of open intervals  such that within the interval $(\alpha_i, \alpha_{i+1})$ the property of $\alpha-$stability is independent of $\alpha$. If $k \geq n$ the moduli spaces coincide for any two different values of $\alpha$ in the range $(\alpha_L, \infty)$, (see \cite[Proposition 4.6.]{Bradlow-Garcia-Prada-Munoz-Newstead1}).  We will denote by $G_i=G_i(n,d,k)$ the moduli space in the interval $(\alpha_i, \alpha_{i+1})$ and by $G_L := G_L(n,d,k)$ for $\alpha > \alpha_L$ .

Coherent systems form an abelian category and the functors $Hom((E,V),-)$ are left exact.  Hence their derived functors denoted by  $Ext^i((E,V),-)$ are well defined. For a more detailed treatment we refer the reader to \cite{MinHe}.

Given two coherent systems $(E_1,V_1)$, $(E_2,V_2)$ of type $(n_1,d_1,k_1)$, $(n_2,d_2,k_2)$, respectively one defines the groups 
\begin{align*}
\mathbb{H}_{21}^0 &:= Hom((E_2,V_2),(E_1,V_1)), \\
\mathbb{H}_{21}^i &:= Ext^i((E_2,V_2),(E_1,V_1)), \,\, \text{for $i > 0$.}
\end{align*}
and consider the long exact sequence 
\begin{align*}
0 & \longrightarrow Hom((E_2,V_2), (E_1,V_1)) \longrightarrow Hom(E_2,E_1) \longrightarrow Hom(V_2,H^0(X,E_1)/V_1) \\
& \longrightarrow Ext^1((E_2,V_2), (E_1,V_1)) \longrightarrow Ext^1(E_2,E_1) \longrightarrow Hom(V_2,H^1(X,E_1)) \\
& \longrightarrow Ext^2((E_2,V_2), (E_1,V_1)) \longrightarrow 0.
\end{align*}
From \cite[Proposition 3.2.]{Bradlow-Garcia-Prada-Munoz-Newstead1} follows
\[\dim \,\, Ext^1((E_2,V_2),(E_1,V_1) )= C_{21}+ dim \,\, \mathbb{H}_{21}^0 + dim \,\, \mathbb{H}_{21}^2,\]
where
\[C_{21} =n_1n_2(g-1) - d_1n_2 + d_2n_1 + k_2d_1 - k_2n_1(g-1) - k_1k_2.\]

\begin{proposition}
The space of equivalence classes of extensions 
\begin{equation} \label{extensioncohsystems}
0 \longrightarrow (E_1,V_1) \longrightarrow (E,V) \longrightarrow (E_2,V_2) \longrightarrow 0
\end{equation}
is isomorphic to $Ext^1((E_2,V_2), (E_1,V_1))$. Moreover, the quotient of the space of non-trivial extensions by the natural action of $\mathbb{C}^*$ can be identified with the projective space \[\mathbb{P}Ext^1((E_2,V_2), (E_1,V_1)).\]
\end{proposition}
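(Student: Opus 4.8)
The plan is to recognize this as the Yoneda classification of extensions, using the abelian-category structure on coherent systems recalled above. First I would pin down the notion of equivalence: two extensions of $(E_2,V_2)$ by $(E_1,V_1)$ are equivalent if there is a morphism of coherent systems between their middle terms restricting to the identity on $(E_1,V_1)$ and inducing the identity on $(E_2,V_2)$. Since coherent systems form an abelian category, the five lemma applies and any such morphism is automatically an isomorphism, so this is a genuine equivalence relation. The goal is then to construct a map $\theta$ from equivalence classes of extensions to $Ext^1((E_2,V_2),(E_1,V_1))$ and to exhibit an inverse.

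To define $\theta$, I would apply the left-exact functor $Hom((E_2,V_2),-)$ to the extension \eqref{extensioncohsystems} and pass to the long exact sequence of its derived functors $Ext^i((E_2,V_2),-)$, which furnishes a connecting homomorphism $\delta\colon Hom((E_2,V_2),(E_2,V_2)) \to Ext^1((E_2,V_2),(E_1,V_1))$. I set $\theta := \delta(\mathrm{id}_{(E_2,V_2)})$. Naturality of $\delta$ with respect to a morphism of extensions shows that $\theta$ depends only on the equivalence class, so $\theta$ is well defined. Injectivity is the splitting criterion: $\theta = 0$ holds precisely when $\mathrm{id}_{(E_2,V_2)}$ lies in the image of $Hom((E_2,V_2),(E,V)) \to Hom((E_2,V_2),(E_2,V_2))$, i.e. when the projection $(E,V) \to (E_2,V_2)$ admits a section of coherent systems, i.e. when the extension represents the trivial class.

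For surjectivity I would use the Yoneda pullback construction: given a class $\xi$, I represent it by a morphism into the cokernel of a short exact sequence resolving one of the two factors and pull back (respectively push out) that sequence, producing an extension of $(E_2,V_2)$ by $(E_1,V_1)$ whose associated connecting class is $\xi$. The two constructions are mutually inverse, and a direct check that $\theta$ carries the Baer sum of extensions to the addition in $Ext^1$ upgrades the set bijection to a group isomorphism (only the bijection is needed for the statement). For the \emph{moreover} part, I would describe the natural $\mathbb{C}^*$-action: scaling a nonzero class $\xi$ by $\lambda \in \mathbb{C}^*$ corresponds to replacing the inclusion $(E_1,V_1)\hookrightarrow(E,V)$ by its composite with $\lambda\cdot\mathrm{id}_{(E_1,V_1)}$, which yields an extension whose middle term is isomorphic to $(E,V)$ as a coherent system but whose class is $\lambda\xi$. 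Hence classes differing by a scalar produce isomorphic middle objects, and the nontrivial extensions modulo this action are exactly the lines in $Ext^1$, i.e. $\mathbb{P}Ext^1((E_2,V_2),(E_1,V_1))$.

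The delicate point, and the step I would treat most carefully, is surjectivity of $\theta$: it amounts to checking that the derived-functor $Ext^1$ entering the long exact sequence above coincides with the Yoneda $Ext^1$ that tautologically classifies extensions. This identification is automatic once one knows the category of coherent systems has enough injectives or enough projectives (so that the pushout/pullback construction of the previous paragraph is available); alternatively one produces the representing extension explicitly from the concrete complex computing $Ext^i$ for coherent systems. I would invoke the corresponding results on the abelian structure of coherent systems from \cite{MinHe} and \cite{Bradlow-Garcia-Prada-Munoz-Newstead1} to secure this comparison.
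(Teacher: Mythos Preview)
Your argument is the standard Yoneda classification of extensions in an abelian category, carried out carefully and correctly for coherent systems; the identification of nontrivial extensions modulo $\mathbb{C}^*$ with $\mathbb{P}Ext^1$ is also handled correctly. Note, however, that the paper does not actually prove this proposition: it appears in the review section as a background fact (drawn from \cite{MinHe} and \cite{Bradlow-Garcia-Prada-Munoz-Newstead1}) and is stated without proof, so there is no ``paper's own proof'' to compare against---your write-up simply supplies the standard argument the paper takes for granted.
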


Note that if $Aut(E_i,V_i) = \mathbb{C}^*$, then the isomorphism classes of $(E,V)$ appearing in the middle of (\ref{extensioncohsystems}) will be parametrized by $\mathbb{P}Ext^1((E_2,V_2), (E_1,V_1))$.

\begin{definition}
For any $(n,d,k)$, the Brill-Noether number $\beta(n,d,k)$ is defined by
\[\beta(n,d,k) = n^2(g-1)+1-k(k-d+n(g-1)).\]
\end{definition}

\section{Segre Invariant for Coherent Systems}

In  this section, we introduce the $(m,t)-$Segre invariant for coherent systems and show some properties which will be used later.   We show that the $(m,t)-$Segre invariant induces a semicontinuous function on the families of coherent systems, the proof proceeds as Maruyama in \cite{Maruyama}, \cite{Maruyama1} and \cite{Maruyama2} for the $m-$Segre invariant for vector bundles. Unless otherwise stated we assume that $\alpha > 0$ is a rational number. 

Let $(E,V)$ be a coherent system of type $(n,d,k)$.   For any pair of integers $m, t$, $0 < m < n$, $0 \leq t \leq k$ consider the set 
\[
P_{m,t}(E,V):= \{(F,W) \subset (E,V) :  \,\, \text{$rk_F = m$,  $dim \,\, W =t$  and  $(F,W)$ principal}  \}.
\]

By Remark \ref{subdefineextension}, the set $P_{m,t}(E,V)$ consists of all coherent subsystems $(F,W)$ of type $(m,d_F,t)$  of $(E,V)$ for which there exists  an  exact sequence of coherent systems
\[0 \longrightarrow (F,W) \longrightarrow (E,V) \longrightarrow (G,Z) \longrightarrow 0.\]

\begin{remark}\emph{
Let $(E,V)$ be a coherent system of type $(n,d,k)$.
\begin{itemize}
\item The set $P_{m,0}(E,V)$ is not in correspondence with the set of all subbundles of rank $m$ of $E$. There exists subbundles $F \subset E$ of rank $m$ for which $dim \,\, V \cap H^0(X,F) > 0$.
\item The set $P_{n,t}(E,V)$ is empty for all $t \neq k$ and $P_{n,k}(E,V) = \{(E,V)\}$.
\end{itemize}}
\end{remark}

\begin{remark} \label{slopebounded}\emph{
Since the degrees of subbundles of $E$ are bounded above} $($see \cite[Lemma 5.4.1]{LePotier2}$)$, \emph{we have that for a fixed value of $\alpha$ the coherent system $(E,V)$ does not admit subsystems of $\alpha-$slope high arbitrary, i.e,  the set 
\[\{\mu_\alpha(F,W) : (F,W) \subset (E,V)\}\]
is bounded above.}
\end{remark}

\begin{definition} \label{DefiSegreInvariant}\emph{
Let $\alpha > 0$ and $(E,V)$ be a coherent system of type $(n,d,k)$.  The \textit{$(m,t)-$Segre invariant  for coherent systems} denoted by $S^\alpha_{m,t}$ is defined by
\[
S^\alpha_{m,t}(E,V)   : = 
(mn) \min_{(F,W) \in P_{m,t}(E,V)} \{\mu_\alpha(E,V) - \mu_\alpha(F,W)\}, 
\] 
provided that $P_{m,t}(E,V) \neq \emptyset$.  If $P_{m,t}(E,V) = \emptyset$, we define $S^\alpha_{m,t}(E,V)= \infty$.}
\end{definition}

By Remark \ref{slopebounded}, $S^\alpha_{m,t}(E,V)$ is a rational number well defined depending only on $(E,V)$, $m$ and $t$. Since  any $(F,W) \in P_{m,t}(E,V)$ defines an exact sequence of coherent systems 
\[0 \longrightarrow (F,W) \longrightarrow (E,V) \longrightarrow (G,Z) \longrightarrow 0,\]
Definition \ref{DefiSegreInvariant} is equivalent to
\[S^\alpha_{m,t}(E,V) = n(n-m) \min \{\mu_\alpha(G,Z) - \mu_\alpha(E,V)\}\]
where the minimum is taken over any quotient coherent system of $(E,V)$ of type $(n-m, d-d_F, k-t)$.

Here are some basic properties of this concept.

\begin{proposition} \label{acotainvariante}
Let $\alpha > 0$ and $(E,V)$ be a coherent system of type $(n,d,k)$, then
\begin{enumerate}
\item $(E,V)$ is $\alpha-$stable \emph{(resp. $\alpha-$semistable)}, if and only if \[S^\alpha_{m,t}(E,V) > 0, \,\,\, \text{\emph{(resp. $\geq 0$)}}\] for any pair of integers $m, t$, $0 < m < n$, $0 \leq t \leq k$.
\item If $P_{m,t}(E,V) \neq \emptyset$ then 
\[S^\alpha_{m,t}(E,V) \leq m(n-m)(g-1)+(n-1)+\alpha(mk-nt).\]
\end{enumerate}
\end{proposition}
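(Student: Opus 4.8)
The plan is to prove the two parts separately, both of them by unwinding the definition of $S^\alpha_{m,t}$ in terms of $\alpha$-slopes.

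For part (1), I would argue as follows. Suppose first that $P_{m,t}(E,V) \neq \emptyset$. Then by Definition \ref{DefiSegreInvariant} the quantity $S^\alpha_{m,t}(E,V)$ is $mn$ times the minimum of $\mu_\alpha(E,V) - \mu_\alpha(F,W)$ over principal subsystems $(F,W)$ of type $(m,d_F,t)$, and since $mn > 0$ the sign of $S^\alpha_{m,t}(E,V)$ is exactly the sign of that minimum; it is positive (resp. nonnegative) precisely when $\mu_\alpha(F,W) < \mu_\alpha(E,V)$ (resp. $\leq$) for every such $(F,W)$. If instead $P_{m,t}(E,V) = \emptyset$, then $S^\alpha_{m,t}(E,V) = \infty > 0$, so the inequality holds vacuously. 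Now for the equivalence with $\alpha$-stability: $\alpha$-stability requires $\mu_\alpha(F,W) < \mu_\alpha(E,V)$ for \emph{all} proper subsystems $(F,W)$, not merely the principal ones. The key observation making the restriction to principal subsystems harmless is that if $(F,W)$ is any proper subsystem with $rk_F = m$, $\dim W = t$, then taking $W' := V \cap H^0(X,F) \supseteq W$ yields a principal subsystem $(F,W')$ of some type $(m,d_F,t')$ with $t' \geq t$, and $\mu_\alpha(F,W') \geq \mu_\alpha(F,W)$ since $\alpha > 0$; moreover every proper subsystem sits inside a principal one of the same rank (take the saturation together with all sections of that saturation lying in $V$), so the destabilizing inequality, if it can be violated at all, can be violated by a principal subsystem. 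Hence $(E,V)$ is $\alpha$-stable iff $S^\alpha_{m,t}(E,V) > 0$ for all admissible $(m,t)$, and likewise for semistability. I should be a little careful here about ranging over all ranks $m$ and all dimensions $t$ simultaneously, and about the degenerate cases $m = n$ (excluded) and subsystems whose rank equals $n$ but whose section space is smaller than $k$.

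For part (2), assume $P_{m,t}(E,V) \neq \emptyset$ and pick any $(F,W) \in P_{m,t}(E,V)$; write $d_F$ for its degree. From
\[
S^\alpha_{m,t}(E,V) \leq (mn)\bigl(\mu_\alpha(E,V) - \mu_\alpha(F,W)\bigr)
= (mn)\left(\frac{d}{n} + \alpha\frac{k}{n} - \frac{d_F}{m} - \alpha\frac{t}{m}\right)
= m d - n d_F + \alpha(mk - nt),
\]
so it suffices to produce one principal subsystem of type $(m,d_F,t)$ with $md - nd_F$ bounded above by $m(n-m)(g-1) + (n-1)$, i.e. with $d_F \geq \frac{md - m(n-m)(g-1) - (n-1)}{n}$; equivalently we want a principal subsystem of type $(m,\ast,t)$ whose degree is as large as permitted. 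The natural route is: forget the sections first, invoke the Hirschowitz/Brambila-Paz–Lange bound $S_m(E) \leq m(n-m)(g-1) + (n-1)$ recalled in Section 2, which guarantees a subbundle $F_0 \subset E$ of rank $m$ with $md - nd_{F_0} \leq m(n-m)(g-1) + (n-1)$; then set $W_0 := V \cap H^0(X,F_0)$, giving a principal subsystem $(F_0, W_0)$ of type $(m, d_{F_0}, t_0)$ for some $t_0 = \dim W_0$. The subtlety, and the main obstacle, is that this $t_0$ need not equal the prescribed $t$: the subbundle realizing the Segre bound may carry too many or too few sections from $V$. I would handle this by a deformation/elementary-modification argument — modifying $F_0$ inside $E$ (e.g. by elementary transformations at general points, or by moving within the family of maximal subbundles) to adjust $\dim(V \cap H^0(X,F))$ to the value $t$ while keeping the degree drop controlled, at the cost of at most the additive slack already present in the bound. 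Alternatively, one can argue directly with the given element of $P_{m,t}(E,V)$: its underlying subbundle $F$ satisfies $S_m(E) \geq md - nd_F$ only in the wrong direction, so instead one should compare $(F,W)$ to the subsystem $(F_0, W_0 \cap \text{(appropriate space)})$ and track how $t$ changes. Getting the bookkeeping on $t$ versus $t_0$ exactly right, so that the final constant is precisely $m(n-m)(g-1) + (n-1) + \alpha(mk-nt)$ with the stated $\alpha$-term, is where the real work lies; the rest is linear algebra on slopes.
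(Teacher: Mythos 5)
Your treatment of part (1) is correct and supplies exactly the detail the paper suppresses behind ``follows directly from the definition'': since $mn>0$, the sign of $S^\alpha_{m,t}(E,V)$ is the sign of $\min\{\mu_\alpha(E,V)-\mu_\alpha(F,W)\}$ over $P_{m,t}(E,V)$, and the restriction to \emph{principal} subsystems loses nothing because replacing $W$ by $V\cap H^0(X,F)$ can only increase $\mu_\alpha(F,\cdot)$ when $\alpha>0$, while full-rank proper subsystems $(E,W)$ with $W\subsetneq V$ never destabilize for the same reason. That is the right level of care and it matches (indeed improves on) the paper's one-sentence argument.

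For part (2) your reduction to bounding $md-nd_F$ is precisely the paper's route, and the obstacle you flag --- that the rank-$m$ subbundle $F_0$ realizing the Hirschowitz bound need not satisfy $\dim(V\cap H^0(X,F_0))=t$ --- is a genuine gap, not a bookkeeping issue; the paper's one-line proof makes the same silent jump. Moreover the gap cannot be closed by the deformation you gesture at, because without further hypotheses the statement fails: take $E=L_1\oplus L_2$ with $\deg L_1=e\gg0$, $\deg L_2=-e$, and $V=\langle s\rangle$ with $0\neq s\in H^0(X,L_1)$, so $(n,d,k)=(2,0,1)$. Every line subbundle other than $L_1$ has degree at most $-e$ and meets $V$ trivially, while $L_1$ carries $s$; hence every $(F,W)\in P_{1,0}(E,V)$ has $d_F\le -e$ and $S^\alpha_{1,0}(E,V)=2e+\alpha$, which exceeds the claimed bound $g+\alpha$ as soon as $2e>g$. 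Any elementary modification of $L_1$ that kills the section must drop the degree below $-e$, so the ``slack already present in the bound'' is nowhere near enough. In short, your proposal for (2) is incomplete as written, but the incompleteness you identified is a defect of the proposition and its published proof: one needs either an extra hypothesis (for instance that some subbundle computing $S_m(E)$ carries exactly $t$ sections of $V$, or an $\alpha$-semistability assumption with a correspondingly reworked constant) or a genuinely different argument controlling $\dim(V\cap H^0(X,F))$ along maximal subbundles.
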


\begin{proof}
The proof of $(1)$ follows directly of the definition of $\alpha-$stability for coherent systems.  For (2), Hirschowitz in \cite{Hirschowitz} showed that $S_m(E) \leq m(n-m)(g-1)+(n-1)$, then for $(F,W) \in P_{m,t}(E,V)$ we would have $S^\alpha_{m,t}(E,V) \leq m(n-m)(g-1)+(n-1)+\alpha(mk-nt)$ as claimed. 
\end{proof}


A coherent system $(F,W) \in P_{m,t}(E,V)$ is called \textit{maximal}  if and only if 
\[S^\alpha_{m,t}(E,V) = (mn) (\mu_\alpha(E,V)- \mu_\alpha(F,W)).\]

In the following section we define the $(m,t)-$Segre function and prove that it is lower semicontinuous.

\subsection{Semicontinuity of the Segre function}

Let $Y$ be a variety and $(\mathcal{E}, \mathcal{V})$ be a family of coherent systems on $X$ of type $(n,d,k)$ parametrized by $Y$.  For any pair of integers $m$, $t$, $0 < m< n$, $0 \leq t \leq k $  the \textit{$(m,t)-$Segre function} is defined as
\begin{align*}
S^\alpha_{m,t} : Y &\longrightarrow \mathbb{R} \cup \{\infty\} \\
y &\longmapsto S^\alpha_{m,t}(\mathcal{E}, \mathcal{V})_y.
\end{align*}

The aim of this section is to prove the following theorem.

\begin{theorem} \label{Theosemicontinuous}
The $(m,t)-$Segre function is lower semicontinuous.
\end{theorem}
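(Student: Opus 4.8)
The strategy is to reduce the semicontinuity of $S^\alpha_{m,t}$ to a relative version of the classical result for vector bundles (Theorem \ref{semicontinuousvectorbundles}), by constructing, for a family $(\mathcal{E},\mathcal{V})$ parametrized by $Y$, a bounded family of all candidate maximal subsystems. The main point is that by Remark \ref{slopebounded} the $\alpha$-slopes of principal subsystems $(F,W)\in P_{m,t}(\mathcal{E}_y,\mathcal{V}_y)$ of fixed type $(m,d_F,t)$ are bounded above uniformly in $y$, so only finitely many degrees $d_F$ occur and for each such $d_F$ the subbundles $F\subseteq\mathcal{E}_y$ of rank $m$ and degree $d_F$ form a bounded family (this is the curve analogue of Maruyama's boundedness of the Quot scheme, \cite{LePotier2}, and is exactly the ingredient Maruyama uses in \cite{Maruyama}, \cite{Maruyama1}, \cite{Maruyama2}). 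For a fixed target value $s_0\in\mathbb{R}\cup\{\infty\}$ we must show that $\{y\in Y : S^\alpha_{m,t}(\mathcal{E}_y,\mathcal{V}_y)\ge s_0\}$ is closed; equivalently, writing $S^\alpha_{m,t}$ in terms of $\mu_\alpha(E,V)-\mu_\alpha(F,W)=\frac{d}{n}+\alpha\frac{k}{n}-\frac{d_F}{m}-\alpha\frac{t}{m}$, the condition is that \emph{no} principal subsystem of type $(m,d_F,t)$ with $(mn)(\mu_\alpha(\mathcal{E}_y,\mathcal{V}_y)-\mu_\alpha(F,W)) < s_0$ exists, and this inequality constrains $d_F$ to finitely many values once $t$ is fixed.

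First I would use the relative Quot scheme: let $\mathrm{Quot}(\mathcal{E}/Y)$ parametrize quotients $\mathcal{E}_y\twoheadrightarrow G$ with $G$ locally free of rank $n-m$ and degree $d-d_F$, ranging over the finitely many relevant $d_F$; this is a scheme projective over $Y$, with universal subbundle $F\hookrightarrow\mathcal{E}_y$ over the point. On this Quot scheme one has a coherent sheaf whose fibre over a point lying over $y$ is $H^0(X,F)$ (e.g. via $p_*$ of the universal subbundle, after a twist to ensure $H^1$ vanishes and the direct image commutes with base change on a stratification), and inside it a subsheaf computing $V\cap H^0(X,F)=\ker\big(H^0(X,F)\to H^0(X,\mathcal{E}_y)/\mathcal{V}_y\big)$, whose rank $\dim(V\cap H^0(X,F))$ is itself upper semicontinuous. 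Intersecting with the locus where this rank is exactly $t$, and where the subbundle is principal (i.e. $W=V\cap H^0(X,F)$ genuinely has dimension $t$), cuts out a locally closed subscheme $Z\subseteq\mathrm{Quot}(\mathcal{E}/Y)$ parametrizing the members of $P_{m,t}(\mathcal{E}_y,\mathcal{V}_y)$ of degree $d_F$. The key is that the fibres of $Z\to Y$ are the sets $P_{m,t}$ of the relevant degree, so the image of $Z$ under the proper map is closed; more precisely, $Z$ decomposes into finitely many pieces indexed by $d_F$, and the function $y\mapsto \min_{d_F}\min_{(F,W)}(\mu_\alpha(\mathcal{E}_y,\mathcal{V}_y)-\mu_\alpha(F,W))$ is a minimum over finitely many constants $d_F$ of functions each of which is (essentially) the image of a proper map and hence lower semicontinuous, giving the claim.

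The technical heart, which I expect to be the main obstacle, is the base-change control of $H^0(X,F)$ and of the intersection $V\cap H^0(X,F)$ over the Quot scheme: $h^0(F)$ is only upper (not lower) semicontinuous, and the condition ``$(F,W)$ is principal of type $(m,d_F,t)$'' forces us to work on the stratum where $\dim(V\cap H^0(X,F))$ is \emph{exactly} $t$, which is locally closed, not closed. One handles this as Maruyama does: twist by a sufficiently ample line bundle $L$ on $X$ so that $H^1(X,F\otimes L)=0$ for all $F$ in the bounded family, replace $\mathcal{E}$ by $\mathcal{E}\otimes p_X^*L$ to make the direct image $p_{Y*}$ locally free and compatible with base change, keep track of the sections of $\mathcal{E}$ lying in the original $\mathcal{V}$ via the evaluation-type map, and then note that the closure of the ``$\le t$'' locus is controlled because a limit of principal subsystems of type $(m,d_F,t)$ is a subsystem $(F_0, W_0)$ with $\dim W_0\le t$ but $\dim(V_0\cap H^0(X,F_0))\ge t$, hence its \emph{principalization} has the right dimension and a slope no smaller — so the infimum of $\mu_\alpha(E,V)-\mu_\alpha(F,W)$ can only decrease in the limit, which is precisely lower semicontinuity of $S^\alpha_{m,t}$. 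Once this boundedness-plus-base-change package is in place, the dual formulation via quotient systems of type $(n-m,d-d_F,k-t)$ in Definition \ref{DefiSegreInvariant} gives an alternative and sometimes cleaner way to run the same argument, and I would invoke whichever is more convenient to conclude that $\{S^\alpha_{m,t}\ge s_0\}$ is closed for every $s_0$.
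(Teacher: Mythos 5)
Your reduction is in the same Maruyama-style spirit as the paper's proof (bound the degrees $d_F$, pass to a relative Quot scheme, use properness/specialization), but it has a genuine gap exactly at the point you flag as "the technical heart". The locus in the Quot scheme where $\dim(V\cap H^0(X,F))$ equals $t$ is only locally closed, so the set of $y\in Y$ admitting a principal subsystem of type $(m,d_F,t)$ is merely constructible, and your repair of this does not work: from "$\dim W_0\le t$ and $\dim\bigl(V_0\cap H^0(X,F_0)\bigr)\ge t$" you conclude that the principalization of the limit "has the right dimension", which is a non sequitur. In the limit the saturated subbundle $F_0$ can acquire extra sections of $V_0$, so $\bigl(F_0,\,V_0\cap H^0(X,F_0)\bigr)$ lies in $P_{m,t'}(E_0,V_0)$ for some $t'>t$ and contributes nothing to $S^\alpha_{m,t}$ at the special point, which by Definition \ref{DefiSegreInvariant} is a minimum over \emph{principal} subsystems with exactly $t$ sections. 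Since $\alpha>0$, the extra sections raise the $\alpha$-slope, so the degeneration violates a slope bound for the pair $(m,t')$ rather than for $(m,t)$, and your argument produces no element of $P_{m,t}$ at the special point. This is precisely the difficulty the paper's machinery is built to absorb: it imposes the subtype-$(a)$ inequalities for \emph{all} pairs $(i,j)$ simultaneously, writes $\{y: S^\alpha_{m,t}(\mathcal{E},\mathcal{V})_y>b\}$ as the union of the loci $Y((a))$ over all $(a)$ with $a_{m,t}=-b/(nm)$ and the remaining entries free (Remark \ref{slopebounded} guarantees this union exhausts the superlevel set), and proves each $Y((a))$ open by showing its complement is constructible (via Quot schemes of quotient coherent systems and the support of a relative $Hom$ sheaf) and stable under specialization over a discrete valuation ring (Lemmas \ref{lemmauniversalquotient} and \ref{specialization}, Theorem \ref{openproperty}). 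Without some device of this kind — tracking all $(i,j)$ at once, or an explicit argument that a jump in the number of sections forces the existence of some \emph{other} principal subsystem with exactly $t$ sections and at least the limiting slope — your proof does not close.

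Two smaller points. First, you state the goal as "$\{y: S^\alpha_{m,t}\ge s_0\}$ is closed", which is the condition for \emph{upper} semicontinuity; lower semicontinuity requires $\{S^\alpha_{m,t}>b\}$ open, equivalently $\{S^\alpha_{m,t}\le b\}$ closed. The mechanism you actually describe (closedness of the image of the existence locus) would, if valid, prove the correct statement, so this is presumably a slip, but it appears twice and should be corrected. Second, the twist-by-an-ample-line-bundle trick controls $H^0(X,F\otimes L)$, not $V\cap H^0(X,F)$; the intersection is better handled, as in the paper, through the kernel of $V\to H^0(X,E/F)$ or through relative $Hom$/$Ext$ sheaves of coherent systems, whose formation does commute with base change after the appropriate flattening stratification.
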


The proof of the theorem consists in showing that for any $b \in \mathbb{R}$ the set 
\begin{equation*}\label{semiopen}
A_b := \{y \in Y :b < (nm)(\mu_\alpha(\mathcal{E}, \mathcal{V})_y - \mu_\alpha(F,W)) \,\, \text{for any $(F,W) \in (\mathcal{E}, \mathcal{V})_y$}\}
\end{equation*}
is open in $Y$.

To show that $A_b$ is open in $Y$ we proceed as Maruyama in \cite{Maruyama}, \cite{Maruyama1} and \cite{Maruyama2} for the $m-$Segre invariant for  vector bundles.  We define coherent systems of subtype $(a)$ and show that it is an open condition.

\subsubsection{Coherent Systems of Subtype $(a)$} 

Let $n \geq 1$, $k \geq 0$ be integer numbers. We will denote by $(a)$ a sequence of $(n-1)(k+1)$ rational numbers, that is
\[(a) := (a_{ij} : 0 < i <n, \,\, 0 \leq t \leq k) \in \mathbb{Q}^{(n-1)(k+1)}.\]
The sequence $(a):= (a_{ij}= 0, \,\, \text{for all pair $i, j$})$ will be denoted by $(0)$. Let $(a), (b) \in \mathbb{Q}^{(n-1)(k+1)}$ we say $(a)$ is greater than or equal than $(b)$  denoted by $(a) \geq (b)$, if $a_{ij} \geq b_{ij}$ for any pair $i,j$.  Hence, the set $\mathbb{Q}^{(n-1)(k+1)}$ is a partial order set.

The following definition extends the usual notion of $\alpha-$stability of coherent systems.

\begin{definition}\emph{
Let $\alpha > 0$, $(E,V)$ be a coherent system of type $(n,d,k)$ and $(a) = (a_{ij}) \in \mathbb{Q}^{(n-1)(k+1)}$.}
\begin{itemize}
\item \emph{$(E,V)$ is of \textit{$\alpha-$subtype} $(a)$ if  
\[\mu_\alpha(F,W) < \mu_\alpha(E,V) + a_{m,t}\]
for all coherent subsystems $(F,W) \in P_{m,t}(E,V)$.}

\item \emph{$(E,V)$ is of \textit{$\alpha-$cotype} $(a)$ if 
\[\mu_\alpha(E,V) - a_{m',t'} < \mu_\alpha(G,Z)\]
for all quotient coherent system $(G,Z)$ of $(E,V)$ of type $(m', d_G, t')$.}
\end{itemize}
\end{definition}

Unless otherwise stated we assume that $\alpha > 0$ is a fixed value.  For simplicity of notation we write subtype $(a)$ (resp. cotype) instead $\alpha-$subtype $(a)$ (resp. $\alpha-$cotype).

Here are some elementary properties of these concepts.

\begin{remark}\label{properties}
\emph{
Let  $(E,V)$ be a coherent system of type $(n,d,k)$ and $(a)$, $(b) \in \mathbb{Q}^{(n-1)(k+1)}$. 
\begin{enumerate}
\item $(E,V)$ is $\alpha-$stable if and only if it is of subtype $(0)$.
\item If $(E,V)$ is of subtype $(a) := (a_{ij})$ with $a_{ij} < 0$ for any $i,j$, then $(E,V)$ is $\alpha-$stable.
\item If $(E,V)$ is of cotype $(a):= (a_{ij})$ with $a_{ij} < 0$ for any $i,j$, then $(E,V)$ is $\alpha-$stable.
\item If $(E,V)$ is of subtype $(a)$, then it is of subtype $(b)$ for all $(a) \leq (b)$.
\item If $(E,V)$ is of cotype $(a)$, then it is of cotype $(b)$ for all $(a) \leq (b)$.
\end{enumerate}}
\end{remark}

The following proposition establishes a relationship between coherent systems of subtype $(a)$ and cotype $(b)$.

\begin{proposition} \label{subtypecotype}
Let $(E,V)$ be a coherent system of type $(n,d,k)$.  The coherent system $(E,V)$ is of subtype $(a)$, if and only if it is of cotype $(b)$ where $(b)$ is the sequence defined by
\[(b) := (b_{ij}= a_{n-i, k-j}\frac{n-i}{i} : 0 < i <n, \,\, 0 \leq j \leq k).\]
\end{proposition}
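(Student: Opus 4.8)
The plan is to translate the subtype condition into a statement about quotient systems via the exact sequence of coherent systems attached to a principal subsystem, and then keep careful track of how ranks, degrees, and section counts match up. First I would fix a principal subsystem $(F,W) \in P_{i,t}(E,V)$ of type $(i, d_F, t)$; by Remark \ref{subdefineextension} it fits in an exact sequence
\[
0 \longrightarrow (F,W) \longrightarrow (E,V) \longrightarrow (G,Z) \longrightarrow 0,
\]
where $(G,Z)$ has type $(n-i, d-d_F, k-t)$. Conversely, every quotient coherent system $(G,Z)$ of $(E,V)$ of type $(m', d_G, t')$ arises in this way from a principal subsystem whose type is $(n-m', d - d_G, k - t')$. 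So the correspondence $(F,W) \leftrightarrow (G,Z)$ sets up a bijection between $P_{i,t}(E,V)$ and the set of quotient systems of type $(n-i, d - d_F, k-t)$.

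The key step is the numerical identity relating the two slope inequalities. Writing $j := t$, $i := \mathrm{rk}_F$, and using $\mathrm{rk}_G = n-i$, $k_Z = k-j$, one has
\[
\mu_\alpha(E,V) - \mu_\alpha(F,W) \;=\; \frac{n-i}{i}\bigl(\mu_\alpha(G,Z) - \mu_\alpha(E,V)\bigr),
\]
which is an elementary consequence of the additivity of degree and of $\dim W$ across the exact sequence together with the definition $\mu_\alpha = (d + \alpha k)/\mathrm{rk}$. (Concretely: $n(\mu_\alpha(E,V) - \mu_\alpha(F,W)) = (n-i)(\mu_\alpha(G,Z) - \mu_\alpha(E,V)) \cdot \frac{i}{i}$ after clearing denominators; I would spell this out in one display line.) Hence the inequality $\mu_\alpha(F,W) < \mu_\alpha(E,V) + a_{i,j}$ defining $\alpha$-subtype $(a)$ is equivalent to
\[
\mu_\alpha(E,V) - \mu_\alpha(G,Z) \;<\; \frac{i}{n-i}\, a_{i,j},
\]
i.e. to $\mu_\alpha(E,V) - \frac{i}{n-i} a_{i,j} < \mu_\alpha(G,Z)$ for the corresponding quotient $(G,Z)$ of type $(n-i, \,\cdot\,, k-j)$.

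Now I would re-index. Setting $i' := n-i$ (the rank of $(G,Z)$) and $j' := k-j$ (its dimension), so that $i = n - i'$ and $j = k - j'$, the displayed inequality reads $\mu_\alpha(E,V) - b_{i',j'} < \mu_\alpha(G,Z)$ with
\[
b_{i',j'} \;=\; \frac{n-i'}{i'}\, a_{n-i',\, k-j'},
\]
which is exactly the sequence $(b)$ in the statement (with the roles of the index names $i,j$ in the proposition matched to $i',j'$). As $i$ runs over $0 < i < n$ and $j$ over $0 \le j \le k$, the pair $(i',j')$ runs over the same ranges, so quantifying over all principal subsystems of all admissible types on the subtype side is the same as quantifying over all quotient systems of all admissible types on the cotype side. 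Therefore $(E,V)$ is of subtype $(a)$ if and only if it is of cotype $(b)$, and the same computation run in reverse gives the converse implication, proving the equivalence.

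The main obstacle I anticipate is purely bookkeeping: making sure the index substitution $i \mapsto n-i$, $j \mapsto k-j$ is applied consistently (it is an involution on the index set, so nothing is lost), and checking that "principal subsystem" on one side corresponds exactly to "quotient system" on the other — this is precisely what Remark \ref{subdefineextension} guarantees, so no subtlety about non-saturated subsheaves intrudes. There is no hard analysis here; the content is the slope identity above and the fact that the subtype/cotype conditions range over dual index sets.
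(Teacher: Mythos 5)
Your proposal is correct and follows essentially the same route as the paper: identify principal subsystems with their quotient systems via the exact sequence of Remark \ref{subdefineextension}, use additivity of $d+\alpha k$ across the sequence to convert the subtype inequality into the cotype inequality, and re-index by the involution $(i,j)\mapsto(n-i,k-j)$. The only cosmetic difference is that you package the arithmetic as the slope identity $\mu_\alpha(E,V)-\mu_\alpha(F,W)=\frac{n-i}{i}\bigl(\mu_\alpha(G,Z)-\mu_\alpha(E,V)\bigr)$ whereas the paper clears denominators directly; the computations are the same.
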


\begin{proof}
Let $(a):= (a_{i,j})$, $(b) := (b_{ij}= a_{n-i, k-j}\frac{n-i}{i} : 0 < i <n, \,\, 0 \leq j \leq k) \in \mathbb{Q}^{(n-1)(k+1)}$ and $(E,V)$ be a coherent system of type $(n,d,k)$ of subtype $(a)$.  Let $(G,Z)$ be a quotient coherent system of $(E,V)$ of type $(m',d_G,t')$ and $(F,W)  \in P_{m,t}(E,V)$, which fit into the following exact sequence 
\[0 \longrightarrow (F,W) \longrightarrow (E,V) \longrightarrow (G,Z) \longrightarrow 0.\]

Since $(E,V)$ is of subtype $(a)$ the following inequality holds
\[\mu_\alpha(F,W) < \mu_\alpha(E,V)+ a_{m,t}\]
that is
\[n (d_F +  \alpha t ) < m( d+  \alpha k ) + (m n) a_{m,t},\]
which is equivalent to 
\[n [d- d_G +  \alpha (k-t')]  < (n-m') (d +   \alpha k)  + n (n-m')    a_{n-m',k-t'}\]
and this gives 
\[ \mu_\alpha(E,V)- a_{n-m', k-t'}\frac{n-m'}{m'} < \mu_\alpha(G,Z).\]
That is
\[\mu_\alpha(E,V) - b_{m',t'} < \mu_\alpha(G,Z).\]
Therefore, $(E,V)$ is of cotype $(b)$.
In the same way, we can see that if $(E,V)$ is of cotype $(b)$, then it is of subtype $(a)$ which completes the proof.
\end{proof}

Let $(a) \in \mathbb{Q}^{(n-1)(k+1)}$ and $a_{m,t}$ be the $(m,t)-$member of the sequence $(a)$.  For $1 < m< n$, we denoted by $(a-a_{m,t})$ the sequence defined as 
\[(a-a_{m,t}) := (a_{i,j}-a_{m,t}: 0 < i <m, 0 \leq j \leq t) \in \mathbb{Q}^{(n-m)(k-t)}.\]

\begin{example}\emph{
Let $n=3$, $k=2$ and $(a) = (a_{1,0}, a_{1,1}, a_{1,2}, a_{2,0}, a_{2,1}, a_{2,2}) \in \mathbb{Q}^6$. The sequence $(a-a_{21})$ is defined by
\[(a-a_{21}) := (a_{10}-a_{21}, a_{11}-a_{21}) \in \mathbb{Q}^2.\]}
\end{example}

The following lemma yields information about coherent systems that are not of subtype $(a)$.

\begin{lemma} \label{nocotype}
Let $\alpha > 0$ and $(E,V)$ be a coherent system of type $(n,d,k)$.  If $(E,V)$ is not of subtype $(a)$, then there exist a pair of integers $m$, $t$, $0 < m <n$,  $0 \leq t \leq k$ and a coherent system in $P_{m,t}(E,V)$ such that it is of subtype $(a - a_{m,t})$ and it has $\alpha-$slope greater than or equal to $\frac{d + \alpha k}{n} + a_{m,t}$.
\end{lemma}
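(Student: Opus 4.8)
The plan is to extract a ``worst'' subsystem witnessing the failure of subtype $(a)$ and then pass to a principal subsystem of that one. Since $(E,V)$ is not of subtype $(a)$, by definition there is at least one pair $(m,t)$ with $0 < m < n$, $0 \le t \le k$ and a principal subsystem $(F,W) \in P_{m,t}(E,V)$ with
\[
\mu_\alpha(F,W) \;\ge\; \mu_\alpha(E,V) + a_{m,t} \;=\; \frac{d+\alpha k}{n} + a_{m,t}.
\]
Among all such pairs $(m,t)$, first I would fix $m$ to be the smallest rank for which some violating subsystem of some $W$-dimension exists; equivalently, I will choose $(F,W)$ violating subtype $(a)$ with $rk_F$ minimal. (Minimality of the rank is the device that will force the resulting subsystem to be of subtype $(a-a_{m,t})$; one could instead minimise a lexicographic quantity, but rank-minimality is the cleanest.)

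Next I would verify the subtype claim for $(F,W)$. Suppose $(F,W)$ is \emph{not} of subtype $(a - a_{m,t})$. By definition of the shifted sequence, this means there is a principal subsystem $(F',W') \in P_{i,j}(F,W)$ with $0 < i < m$, $0 \le j \le t$, such that
\[
\mu_\alpha(F',W') \;\ge\; \mu_\alpha(F,W) + (a_{i,j} - a_{m,t}).
\]
Now $(F',W')$ is a subsystem of $(E,V)$; I would replace it by the principal subsystem $(F'',W'')$ of $(E,V)$ it generates, namely $W'' = V \cap H^0(X,F')$ (so $F''=F'$, $W'\subseteq W''$, hence $\dim W'' \ge j$, and $\mu_\alpha(F'',W'') \ge \mu_\alpha(F',W')$). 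Combining the last two displayed inequalities,
\[
\mu_\alpha(F'',W'') \;\ge\; \mu_\alpha(F,W) + a_{i,j} - a_{m,t} \;\ge\; \mu_\alpha(E,V) + a_{m,t} + a_{i,j} - a_{m,t} \;=\; \mu_\alpha(E,V) + a_{i,j}.
\]
If $\dim W'' = j'$ with $j \le j' \le k$, then passing from $a_{i,j}$ to $a_{i,j'}$ is harmless provided the sequence entries are arranged so that this still violates subtype $(a)$; in any case $(F'',W'')$ is then a principal subsystem of $(E,V)$ of rank $i < m$ violating subtype $(a)$ in row $i$, contradicting the minimality of $m$. Therefore $(F,W)$ is of subtype $(a - a_{m,t})$, and by construction $\mu_\alpha(F,W) \ge \frac{d+\alpha k}{n} + a_{m,t}$, which is exactly the assertion.

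The main obstacle I anticipate is the bookkeeping around the $W$-dimension when passing from $(F',W')$ to its principal hull $(F'',W'')$: the $W$-dimension can only go up, so strictly speaking the violated index becomes $(i,j')$ with $j' \ge j$ rather than $(i,j)$, and one must check that this still contradicts minimality of $m$ (it does, since minimality is imposed on the rank alone, and for rank $i$ \emph{some} violating $W$-dimension now exists). A secondary point is to confirm that $P_{i,j}(F,W) \neq \emptyset$ forces $P_{i,j'}(E,V) \neq \emptyset$ for the relevant $j'$, which is immediate because $(F'',W'')$ itself lies in it. Once these two observations are in place the argument is a short chain of the slope inequalities rewritten via $\mu_\alpha(F,W) = (d_F + \alpha\,\dim W)/rk_F$, exactly as in the manipulation carried out in the proof of Proposition~\ref{subtypecotype}.
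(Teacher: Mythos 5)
Your argument is correct and is essentially the paper's proof: the paper runs an induction on rank, which is exactly your ``choose a violating principal subsystem of minimal rank'' device unrolled into a minimal-counterexample argument. The one point you leave conditional --- whether passing from $W' = W \cap H^0(X,F')$ to its principal hull $W'' = V \cap H^0(X,F')$ can raise the $W$-dimension and land you at an index $(i,j')$ for which $a_{i,j'}$ bears no useful relation to $a_{i,j}$ --- is in fact a non-issue, because principality is transitive: since $(F,W)$ is principal in $(E,V)$ and $H^0(X,F') \subseteq H^0(X,F)$, one has $W' = W \cap H^0(X,F') = \bigl(V \cap H^0(X,F)\bigr) \cap H^0(X,F') = V \cap H^0(X,F') = W''$, so $j' = j$ and $(F',W')$ already lies in $P_{i,j}(E,V)$. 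This is the same fact the paper invokes implicitly when it writes that a principal subsystem of a principal subsystem belongs to the corresponding $P_{i,j}(E,V)$; with it, your chain of slope inequalities goes through verbatim and the contradiction with minimality of the rank is immediate, so you can delete the ``provided the sequence entries are arranged'' caveat.
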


\begin{proof}
Since $(E,V)$ is not of subtype $(a)$ there exist a pair of integers $m',t'$, $0 < m' < n, 0 \leq t' \leq k$ and a coherent system  $(F',W') \in P_{m',t'}(E,V)$ such that
\begin{equation} \label{inequality}
\mu_\alpha(F',W') \geq \mu_\alpha(E,V) + a_{m',t'}.
\end{equation}
Suppose that $(F',W')$ is not of subtype $(a-a_{m',t'})$.  By induction on rank we obtain a pair of integers $m, t$, $0 < m <m'$, $0 \leq t \leq t'$ and a coherent system $(F,W) \in P_{m,t}(F',W') \subset P_{m,t}(E,V)$ which satisfies 
\begin{equation} \label{resultado}
\mu_\alpha(F,W) \geq \mu_\alpha(F',W')+ a_{m,t}- a_{m',t'}.
\end{equation}
Replacing (\ref{inequality})  in  (\ref{resultado}) follows that
\begin{align*}
\mu_\alpha(F,W) & \geq \mu_\alpha(E,V)+ a_{m',t'} + a_{m,t}-a_{m',t'}  \\
& = \mu_\alpha(E,V) + a_{m,t} \\
& = \frac{d+\alpha k}{n} + a_{m,t}.
\end{align*}
Therefore $(F,W)$ is one of the systems in $P_{m,t}(E,V)$ which satisfies the lemma.
\end{proof}

Let us denote by $A(n,d,k)$  the set of isomorphism classes of coherent systems on $X$ of type $(n,d,k)$.  For fix $\alpha > 0$ and $(a) \in \mathbb{Q}^{(n-1)(k+1)}$ let 
\[B((a)) := \{(E,V) \in A(n,d,k) : \text{$(E,V)$ is of subtype $(a)$}\}.\]  
The following lemma show that the set $B((a))$ is bounded.  The proof proceeds as Le Potier in \cite[Th\'eoreme 4.11.]{LePotier} and use  the following result. 

\begin{proposition} \cite[Proposition 2.6.]{King-Newstead} \label{propKN}
For fixed $n, d, b$, there is a bounded family containing all torsion-free sheaves $E$ on $X$ with $rk_E = n$, $d_E = d$ and such that all non-zero subsheaves $F$ of $E$ have slope $\mu(F) \leq b$.
\end{proposition}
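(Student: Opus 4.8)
The plan is to prove this as a boundedness statement in the standard way: reduce the hypothesis on subsheaves to a two-sided control of the Harder--Narasimhan polygon, convert that into uniform Castelnuovo--Mumford regularity via Serre duality, and then invoke Grothendieck's finiteness of Quot schemes. First I would record the two harmless reductions. Since $X$ is a smooth curve, every torsion-free sheaf is locally free, so we are dealing with vector bundles; and since the saturation of a subsheaf has degree at least that of the subsheaf, the condition ``every non-zero subsheaf $F$ has $\mu(F)\le b$'' is equivalent to $\mu_{\max}(E)\le b$, where $\mu_{\max}$ is the slope of the maximal destabilizing subbundle. Fixing once and for all an ample line bundle $\mathcal{O}_X(1)=\mathcal{O}_X(p)$ of degree $1$, the goal becomes to produce an integer $m_0$ depending only on $n,d,b,g$ such that every member of the family is $m_0$-regular, since uniform regularity is what drives the boundedness machine.

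The key step is to upgrade the one-sided bound $\mu_{\max}(E)\le b$ to a two-sided bound, using that the rank and degree are fixed. Let $0=E_0\subset E_1\subset\cdots\subset E_\ell=E$ be the Harder--Narasimhan filtration, with semistable quotients of ranks $r_i$ and slopes $\mu_1>\cdots>\mu_\ell$, so $\mu_{\max}(E)=\mu_1\le b$ and $\mu_{\min}(E)=\mu_\ell$. Writing $d=\sum_i r_i\mu_i$ and using $\mu_i\le b$ for $i<\ell$ gives
\[
r_\ell\,\mu_\ell \;=\; d-\sum_{i<\ell}r_i\mu_i \;\ge\; d-b(n-r_\ell).
\]
Since $\mu_{\max}(E)\ge\mu(E)=d/n$ always holds, a non-empty family forces $b\ge d/n$, so $d-bn\le 0$ and the inequality above yields the clean lower bound $\mu_{\min}(E)\ge d-(n-1)b=:b'$, a constant depending only on $n,d,b$. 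Now Serre duality gives $H^1(X,E(m))\cong H^0(X,\omega_X\otimes E^\vee(-m))^\vee$, and tensoring shifts slopes so that $\mu_{\max}\bigl(\omega_X\otimes E^\vee(-m)\bigr)\le -\mu_{\min}(E)+(2g-2)-m\le -b'+2g-2-m$. Choosing $m_0$ with $m_0-1>2g-2-b'$ makes this slope negative, hence a sheaf with no sections, so $H^1(X,E(m))=0$ for all $m\ge m_0-1$ and all members $E$. Riemann--Roch then pins down $h^0(X,E(m_0))=\chi(E(m_0))=d+nm_0+n(1-g)=:N$, a constant independent of $E$, and the vanishing of $H^1(X,E(m_0-1))$ makes $E(m_0)$ globally generated.

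To finish, for each member of the family the evaluation map produces a surjection $\mathcal{O}_X(-m_0)^{\oplus N}\twoheadrightarrow E$, so every such $E$ appears as a quotient of the single fixed sheaf $\mathcal{O}_X(-m_0)^{\oplus N}$ with the Hilbert polynomial determined by $(n,d)$. By Grothendieck's theorem the relevant $\mathrm{Quot}$ scheme is of finite type, so these quotients are parametrized by a scheme of finite type and the family is bounded, which is exactly the assertion. I expect the main obstacle to be precisely the first half of the middle paragraph: the hypothesis only bounds subsheaves from above, so one must genuinely use the fixed rank and degree to recover a lower bound on $\mu_{\min}(E)$, as without controlling the \emph{whole} Harder--Narasimhan polygon the cohomology $H^1(E(m))$ cannot be forced to vanish uniformly and no single regularity bound exists.
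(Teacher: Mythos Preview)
The paper does not supply a proof of this proposition: it is quoted verbatim as \cite[Proposition~2.6]{King-Newstead} and used as a black box in the proof of Lemma~\ref{lemmabounded}. So there is no ``paper's own proof'' to compare against.

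That said, your argument is the standard one and is correct. The reduction to $\mu_{\max}(E)\le b$, the extraction of a uniform lower bound $\mu_{\min}(E)\ge d-(n-1)b$ from the Harder--Narasimhan filtration together with the fixed $(n,d)$, and the passage via Serre duality to uniform vanishing of $H^1$ and hence uniform Castelnuovo--Mumford regularity, are exactly how one proves this kind of boundedness over a curve; the final appeal to the finite-type Quot scheme is the canonical conclusion. One small point of phrasing: when you say ``the vanishing of $H^1(X,E(m_0-1))$ makes $E(m_0)$ globally generated'', what you actually need is $H^1(X,E(m_0)\otimes\mathcal{O}_X(-q))=0$ for every point $q\in X$, not just for the chosen base point $p$. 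Your slope estimate already gives this (the computation only uses the degree of the twist, not which point was removed), so the argument goes through, but it is worth saying explicitly. Alternatively, you can simply invoke that $m_0$-regularity implies $E(m_0)$ is globally generated, which is the standard package.
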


\begin{lemma} \label{lemmabounded}
The set $B((a))$ is bounded.
\end{lemma}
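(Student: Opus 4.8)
\textbf{Proof proposal for Lemma \ref{lemmabounded}.}

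The plan is to deduce the boundedness of $B((a))$ from Proposition \ref{propKN} by controlling the slopes of subsheaves of the underlying vector bundle $E$ of any coherent system $(E,V)$ of subtype $(a)$. First I would fix the finitely many data: the type $(n,d,k)$, the value $\alpha > 0$, and the sequence $(a) = (a_{ij})$; in particular the rational numbers $a_{ij}$ range over a finite index set, so $a := \max_{i,j} a_{ij}$ is a well-defined finite number. The key observation is that if $(E,V)$ is of subtype $(a)$ and $F \subset E$ is any non-zero proper subbundle of rank $m$, then setting $W := V \cap H^0(X,F)$ and $t := \dim W$ gives a principal subsystem $(F,W) \in P_{m,t}(E,V)$, whence the subtype condition yields
\[
\frac{d_F}{m} + \alpha\frac{t}{m} = \mu_\alpha(F,W) < \mu_\alpha(E,V) + a_{m,t} = \frac{d}{n} + \alpha\frac{k}{n} + a_{m,t}.
\]
Since $0 \le t \le k$ and $\alpha > 0$, the term $\alpha t/m \ge 0$ can be dropped from the left side, and $a_{m,t} \le a$, so
\[
\mu(F) = \frac{d_F}{m} < \frac{d}{n} + \alpha\frac{k}{n} + a =: b,
\]
where $b$ is a constant depending only on $(n,d,k)$, $\alpha$ and $(a)$, but not on $(E,V)$ or on $F$. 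An arbitrary non-zero subsheaf $F'$ of $E$ is contained in the subbundle $F$ it generically generates, which has the same rank and degree $d_F \ge d_{F'}$, so $\mu(F') \le \mu(F) < b$ as well; thus every underlying bundle $E$ of a coherent system in $B((a))$ has all its non-zero subsheaves of slope $\le b$ (or strictly less than $b$, which suffices).

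Next I would invoke Proposition \ref{propKN} with this uniform bound $b$ (and the fixed rank $n$, degree $d$) to conclude that the set $\{E : (E,V) \in B((a))\}$ of underlying bundles lies in a bounded family, say parametrized by a scheme $S$ of finite type. Finally, to bound the coherent systems themselves and not merely the bundles, I would note that $V \subseteq H^0(X,E)$ has fixed dimension $k$, and that over the bounded family $S$ the spaces of sections $H^0(X, \mathcal{E}_s)$ have bounded dimension; hence the $k$-dimensional subspaces $V$ vary in a relative Grassmannian bundle over (a stratification of) $S$, which is again of finite type. This exhibits $B((a))$ as parametrized by a scheme of finite type over $\mathbb{C}$, i.e. $B((a))$ is bounded. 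This is exactly the argument Le Potier uses in \cite[Th\'eor\`eme 4.11.]{LePotier}, adapted to the subtype condition in place of $\alpha$-semistability.

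The main technical point to be careful about is the passage from the subtype inequality, which a priori only controls principal subsystems $(F,W)$ with $W = V \cap H^0(X,F)$, to a bound on \emph{all} subsheaves of $E$: one must check that enlarging $W$ or passing from a subsheaf to its saturation only helps (it lowers the slope of the quotient constraint, or keeps the rank while not decreasing the degree), so that no subsheaf of large slope can slip through. The inequality $\alpha t/m \ge 0$ is what makes the sections $V$ harmless here; without $\alpha > 0$ this step would fail. Once the uniform slope bound $b$ is in hand the rest is a routine application of the cited boundedness results, so I would present that part briefly. The case $1 < m < n$ versus the extreme ranks requires no separate treatment since we only use proper subbundles $0 \ne F \subsetneq E$, and $\mu(F) < b$ holds for all of them.
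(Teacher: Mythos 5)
Your proposal is correct and follows essentially the same route as the paper: a uniform slope bound $b = \frac{d+\alpha k}{n} + \max_{i,j} a_{ij}$ on all non-zero subsheaves extracted from the subtype condition applied to principal subsystems, then Proposition \ref{propKN} to bound the underlying bundles, then a relative Grassmannian over the resulting parameter scheme to account for the $k$-dimensional subspaces $V$. The only (cosmetic) divergence is in the last step, where you take the Grassmannian of $k$-dimensional subspaces of the section spaces after stratifying so that $h^0$ is constant, while the paper uses the dual formulation of Definition \ref{Family2} and takes the Grassmannian of rank-$k$ locally free quotients of $\underline{Ext}^1_{p_S}(\mathcal{E},\omega_{X\times S/S})$, which sidesteps the stratification.
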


\begin{proof}
Let $(a) \in \mathbb{Q}^{(n-1)(k+1)}$ and $\bar{a} := \max_{a_{i,j} \in (a)} a_{i,j}.$  Taking $b= \frac{d + \alpha k}{n} + \bar{a}$ in Proposition \ref{propKN} it follows that the set of isomorphism classes of vector bundles occurring in $B((a))$ is bounded.  Let $\mathcal{E}$ be the family of vector bundles on $X$ parametrized by $S$ such that for each $(E,V) \in B((a))$, $E$ is isomorphic to $\mathcal{E}_s$ for some $s \in S$.

Let $p_X$ and $p_S$ the canonical projections of $X \times S$ on $X$ and $S$, respectively.  We will denote by  $\omega_{X \times S/S} = p^*_X(\omega_X)$ the relative canonical sheaf by the projection $p_S$.  Consider the sheaf $\underline{Ext}^1_{p_{S}}(\mathcal{E}, \omega_{X \times S/S})$ on $S$. By \cite[Lemme 4.9]{LePotier} the fiber $\underline{Ext}^1_{p_{S}}(\mathcal{E}, \omega_{X \times S/S})_s$ is isomorphic to $Ext^1( \mathcal{E}_s, \omega_X)$ and by change-base Theorem 
 \[Ext^1(\mathcal{E}_s, \omega_X) \cong H^1(X, \mathcal{E}_s^* \otimes \omega_X).\]

Let us denote by  
\[\pi:\mathcal{G} = \mathcal{G}rass(\underline{Ext}^1_{p_{S}}(\mathcal{E}, \omega_{X \times S/S}),k) \longrightarrow S\]
the Grassmannian of quotient coherent locally free sheaves of rank $k$, equipped with the universal  family  
\[ \pi^*(\underline{Ext}^1_{p_{S}}(\mathcal{E}, \omega_{X \times S/S})) \to \varUpsilon \to  0 .\]
By change-base Theorem 
\[\pi^*(\underline{Ext}^1_{p_{S}}(\mathcal{E}, \omega_{X \times S/S})) \cong
\underline{Ext}^1_{p_{S}}(\mathcal{E}', \omega_{X \times \mathcal{G}/\mathcal{G}})\] where $\mathcal{E}'= (\pi \times id_X)^*\mathcal{E}$.

Hence the pair $(\mathcal{E}', \varUpsilon)$ defines a family of coherent systems on $X$ parametrized by $\mathcal{G}$ in the sense of the Definition \ref{Family2}.  Note that by the way in which we build the family $(\mathcal{E}', \varUpsilon)$ we have that every member of $B((a))$ is isomorphic to one of $\{(\mathcal{E}', \varUpsilon)_y : y \in Y \}$ which proves the lemma.
\end{proof}

In the remainder of this section we show some local properties of the coherent systems of subtype $(a)$, without loss of generality we assume that $Y$ is the spectrum of a discrete valuation ring.  Denote by $y$ (resp. $y_0$) the generic point (resp. closed point) of $Y$.

\begin{lemma}\label{lemmauniversalquotient}
Let  $(\mathcal{E}, \mathcal{V})$ be a family of coherent systems on $X$ parametrized by $Y$.    Let $(G,Z)_y$ be a quotient coherent system of $(\mathcal{E}, \mathcal{V})_y$.  Then there exists a unique quotient family of coherent systems $(\mathcal{G}, \mathcal{Z})$ of $(\mathcal{E}, \mathcal{V})$ such that the restriction on $X \times \{y\}$ is $(G,Z)_y$.
\end{lemma}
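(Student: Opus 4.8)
The plan is to extend the given quotient $(G,Z)_y$ over the generic point to a quotient family over all of $Y$ by taking schematic closures, and then to verify that the resulting family is a genuine family of coherent systems in the sense of Definition \ref{Family2}, i.e. that the quotient sheaf of sections is locally free of the right rank and that the fibers behave correctly.

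\textbf{Step 1: Reduce to the vector bundle and the sections data separately.} Since $Y = \mathrm{Spec}\,R$ for a discrete valuation ring $R$, and a coherent system is, in the sense of Definition \ref{Family2}, a vector bundle family $\mathcal{E}$ together with a locally free quotient $\Gamma$ of $Ext^1_{p_Y}(\mathcal{E}, \omega_{X\times Y/Y})$ of rank $k$. A quotient coherent system $(G,Z)_y$ of $(\mathcal{E},\mathcal{V})_y$ amounts to: a surjection of vector bundles $\mathcal{E}_y \twoheadrightarrow G_y$ on $X_y$ (equivalently, a subbundle $K_y \hookrightarrow \mathcal{E}_y$, the kernel), together with a compatible quotient on the sections side. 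First I would extend the surjection $\mathcal{E}_y \twoheadrightarrow G_y$ to all of $Y$. Let $\mathcal{K}$ be the schematic closure of $K_y$ inside $\mathcal{E}$ over $X\times Y$; since $Y$ is the spectrum of a DVR, $\mathcal{K}$ is flat over $Y$ (torsion-free over a DVR $=$ flat), and $\mathcal{G} := \mathcal{E}/\mathcal{K}$ is then a $Y$-flat quotient. Shrinking/normalizing if necessary one checks $\mathcal{K}$ is a subbundle (saturate it: the saturation of $\mathcal{K}$ in $\mathcal{E}$ agrees with $\mathcal{K}$ at the generic fiber, and on the special fiber the quotient $\mathcal{G}_{y_0}$ is the unique $Y$-flat limit), so $\mathcal{G}$ is a family of vector bundles on $X$ parametrized by $Y$, with $\mathcal{G}_y = G_y$. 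Uniqueness here is exactly the uniqueness of the flat limit: any two $Y$-flat quotients of $\mathcal{E}$ restricting to $G_y$ on the generic fiber have the same schematic closure of their kernels, hence coincide.

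\textbf{Step 2: Extend the sections quotient.} Applying $Ext^1_{p_Y}(-,\omega_{X\times Y/Y})$ to the exact sequence $0 \to \mathcal{K} \to \mathcal{E} \to \mathcal{G} \to 0$ and using \cite[Lemme 4.9]{LePotier} together with base change (as in the proof of Lemma \ref{lemmabounded}) gives a map $Ext^1_{p_Y}(\mathcal{G},\omega_{X\times Y/Y}) \to Ext^1_{p_Y}(\mathcal{E},\omega_{X\times Y/Y})$, and composing with the defining quotient $Ext^1_{p_Y}(\mathcal{E},\omega_{X\times Y/Y}) \twoheadrightarrow \Gamma$ would normally not land us where we want; instead I would argue dually. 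The correct approach: on the generic fiber, $Z_y \subseteq H^0(X_y,G_y)$ is the image of $\mathcal{V}_y$ under $H^0(X_y,\mathcal{E}_y) \to H^0(X_y,G_y)$. Define $\mathcal{Z}$ as the image subsheaf of $p_{Y*}\mathcal{E} \to p_{Y*}\mathcal{G}$ composed appropriately with (the reflexive hull / saturation of) the image of $\mathcal{V}$; more precisely, take $\mathcal{Z}$ to be the saturation in $p_{Y*}\mathcal{G}$ of the image of $\mathcal{V} \subseteq p_{Y*}\mathcal{E} \to p_{Y*}\mathcal{G}$. Over a DVR, saturating a coherent subsheaf of a locally free sheaf yields a subsheaf with locally free quotient; one then checks the rank is $k - \dim Z_{y_0}$... — here I must be careful: the rank of $\mathcal{Z}$ is determined on the generic fiber as $\dim Z_y$, and flatness forces the special fiber to receive a subspace of dimension $\geq \dim Z_y$ mapping to $H^0$, so I would instead fix the rank to be $\dim Z_y$ and take $\mathcal{Z}$ locally free of that rank. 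Passing to the dual (Definition \ref{Family2}) formulation makes this cleaner: $\Gamma$ surjects onto a locally free quotient $\Gamma_{\mathcal{G}}$ of $Ext^1_{p_Y}(\mathcal{G},\omega_{X\times Y/Y})$ pulled along the inclusion, and the kernel being locally free over the DVR is automatic after saturating; the generic fiber recovers $(G,Z)_y$.

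\textbf{Main obstacle.} The genuinely delicate point — the hard part — is the interplay between the vector-bundle limit and the sections limit: there is no a priori reason that the flat limit $\mathcal{Z}_{y_0}$ of the sections quotient maps \emph{injectively} into $H^0(X_{y_0}, \mathcal{G}_{y_0})$, nor that it has dimension exactly $k - \dim Z_y$ (sections can "jump" in the limit, and $h^0$ is only upper semicontinuous). What saves us is that we are only asked to produce a quotient \emph{family of coherent systems}, and the definition of such a family (Definition \ref{Family2}, via the locally free quotient of $Ext^1_{p_Y}$) is precisely engineered so that the condition is automatically an open/closed condition compatible with base change — so I would phrase the entire construction on the $Ext^1_{p_Y}$ side from the outset, where the relevant sheaf operations (pullback along $\mathcal{K}\hookrightarrow\mathcal{E}$, then taking the unique saturated locally free quotient of rank $k$ over the DVR) are manifestly well-defined and unique. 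Uniqueness of $(\mathcal{G},\mathcal{Z})$ overall then follows from uniqueness in Step 1 plus uniqueness of saturation of a coherent subsheaf inside a locally free sheaf over a DVR. I expect Step 2, reconciling the two flat limits, to require the most care; Steps 1 and the uniqueness assertions are standard flat-limit arguments.
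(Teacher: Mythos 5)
Your proof follows essentially the same route as the paper: over the spectrum of a DVR, extend the two pieces of data separately by taking flat limits, the paper doing this by invoking \cite[Lemme 3.7]{Grothendieck} twice --- once for the quotient bundle $\mathcal{E}_y\twoheadrightarrow G_y$ (your Step 1, which is just that lemma unwound into schematic closure plus ``torsion-free over a DVR is flat'') and once for the surjection $\mathcal{V}_y\twoheadrightarrow Z_y$ of sheaves on $Y$ itself, yielding the unique flat quotient $\mathcal{Z}$ of $\mathcal{V}$ with $\mathcal{Z}_y=Z_y$; uniqueness in both cases is uniqueness of the flat limit, exactly as you say. Where you diverge is Step 2: the paper simply takes the flat quotient of $\mathcal{V}$ and asserts the pair $(\mathcal{G},\mathcal{Z})$ is a family of coherent systems, whereas you propose replacing $\mathcal{Z}$ by the saturation of the image of $\mathcal{V}$ in $p_{Y*}\mathcal{G}$ --- note that this choice would in general destroy the surjectivity of $\mathcal{V}\to\mathcal{Z}$ at the closed point, i.e.\ $(\mathcal{G},\mathcal{Z})$ would no longer be a \emph{quotient} family, so the paper's plainer choice is the right one for the statement as phrased. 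The ``main obstacle'' you identify (whether $\mathcal{Z}_{y_0}$ injects into $H^0(X,\mathcal{G}_{y_0})$, equivalently whether the special fibre really satisfies Definition \ref{Family1}) is a genuine point, but the paper's own proof passes over it in silence rather than resolving it, so you are not missing an argument that the paper supplies; your suggestion to phrase everything on the $Ext^1_{p_Y}$ side of Definition \ref{Family2}, where quotients remain quotients under base change, is a reasonable way to make that step honest.
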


\begin{proof}
Let  $(\mathcal{E}, \mathcal{V})$ be a family of coherent systems on $X$ parametrized by $Y$ and $(G,Z)_y$ be a quotient coherent system of $(\mathcal{E}, \mathcal{V})_y$.  As $G_y$ is a quotient bundle of $\mathcal{E}_y$, from  \cite[Lemme 3.7.]{Grothendieck} follows that there exists a unique quotient bundle $\mathcal{G}$ of $\mathcal{E}$ on $X \times Y$, flat over $Y$ such that $\mathcal{G}_y$ is precisely $G_y$.  Moreover, as the morphism $ \mathcal{V}_y \longrightarrow Z_y$ is surjective from \cite[Lemme 3.7.]{Grothendieck}  there is a unique quotient bundle $\mathcal{Z}$ of $\mathcal{V}$, flat over $Y$ such that $\mathcal{Z}_y$ is $Z_y$.  Therefore, $(\mathcal{G}, \mathcal{Z})$ is a family of coherent systems on $X$ parametrized by $Y$ that satisfies the lemma.
\end{proof}

The following lemma shows that the properties subtype and cotype are stable under specialization.

\begin{lemma} \label{specialization}
Let $\alpha > 0$  and  $(\mathcal{E}, \mathcal{V})$ be a family of coherent systems of type $(n,d,k)$ parametrized by $Y$.
\begin{enumerate}
\item If $(\mathcal{E}, \mathcal{V})_{y_0}$  is of cotype $(a)$, then $(\mathcal{E}, \mathcal{V})_y$ is  of cotype $(a)$.
\item If $(\mathcal{E}, \mathcal{V})_{y_0}$ is  of subtype $(a)$, then  $(\mathcal{E}, \mathcal{V})_y$ is  of subtype $(a)$.
\end{enumerate}
\end{lemma}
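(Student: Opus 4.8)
The plan is to deduce both parts from a single specialization statement for cotype. By Proposition \ref{subtypecotype}, a coherent system $(E,V)$ of type $(n,d,k)$ is of subtype $(a)$ if and only if it is of cotype $(b)$ with $b_{ij}=a_{n-i,k-j}\frac{n-i}{i}$, and crucially the correspondence $(a)\mapsto(b)$ depends only on $n$ and $k$, not on $(E,V)$. Since the two fibers $(\mathcal{E},\mathcal{V})_{y_0}$ and $(\mathcal{E},\mathcal{V})_y$ have the same type $(n,d,k)$, part $(2)$ for a sequence $(a)$ is literally equivalent to part $(1)$ for the sequence $(b)$. So it suffices to prove $(1)$ for an arbitrary sequence $(a)$, and I would argue by contraposition.

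Assume $(\mathcal{E},\mathcal{V})_y$ is not of cotype $(a)$. Then there is a quotient coherent system $(G,Z)_y$ of $(\mathcal{E},\mathcal{V})_y$ of some type $(m',d_G,t')$ with $0<m'<n$ and $0\leq t'\leq k$ such that $\mu_\alpha(G,Z)_y\leq\mu_\alpha(\mathcal{E},\mathcal{V})_y-a_{m',t'}$. By Lemma \ref{lemmauniversalquotient} this quotient extends uniquely to a quotient family of coherent systems $(\mathcal{G},\mathcal{Z})$ of $(\mathcal{E},\mathcal{V})$ over $Y$ with $(\mathcal{G},\mathcal{Z})_y=(G,Z)_y$; in particular $\mathcal{G}$ is a vector bundle on $X\times Y$ flat over $Y$ and $\mathcal{Z}$ is locally free of rank $t'$ on $Y$. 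Since rank and degree are constant in the flat family $\mathcal{G}$ over the connected base $Y$, the bundle $\mathcal{G}_{y_0}$ has rank $m'$ and degree $d_G$; and because $(\mathcal{G},\mathcal{Z})$ is a family of coherent systems, $\mathcal{Z}_{y_0}$ injects into $H^0(X,\mathcal{G}_{y_0})$ with $t'$-dimensional image. Hence $(\mathcal{G},\mathcal{Z})_{y_0}$ is a quotient coherent system of $(\mathcal{E},\mathcal{V})_{y_0}$ of type $(m',d_G,t')$, and, using $\mu_\alpha(\mathcal{E},\mathcal{V})_{y_0}=\mu_\alpha(\mathcal{E},\mathcal{V})_y$ (both fibers have type $(n,d,k)$),
\[
\mu_\alpha(\mathcal{G},\mathcal{Z})_{y_0}=\frac{d_G+\alpha t'}{m'}=\mu_\alpha(G,Z)_y\leq\mu_\alpha(\mathcal{E},\mathcal{V})_{y_0}-a_{m',t'}.
\]
Thus $(\mathcal{E},\mathcal{V})_{y_0}$ has a quotient violating cotype $(a)$, so it is not of cotype $(a)$; by contraposition $(1)$ holds, and then $(2)$ follows from the first paragraph.

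The point needing the most care — and the reason Lemma \ref{lemmauniversalquotient} is isolated beforehand — will be the preservation of the type on the special fiber: constancy of rank and degree is immediate from flatness, but one must also ensure that the dimension of the space of sections cut out by $\mathcal{Z}$ does not drop, i.e.\ that the extended pair $(\mathcal{G},\mathcal{Z})$ is genuinely a family of coherent systems in the sense of Definition \ref{Family1}, so that $\mathcal{Z}_{y_0}\hookrightarrow H^0(X,\mathcal{G}_{y_0})$ has rank exactly $t'$ rather than something smaller. An alternative proof of $(1)$ could proceed directly via subsystems, extending a destabilizing principal subsystem of the generic fiber by closure in $X\times Y$ and then saturating, but controlling principality and saturation on the special fiber is more delicate, so the quotient formulation above is preferable.
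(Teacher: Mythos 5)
Your proof is correct and follows essentially the same route as the paper: part (1) is proved by contraposition, extending a destabilizing quotient of the generic fiber to a quotient family over $Y$ via Lemma \ref{lemmauniversalquotient} and using constancy of rank, degree and dimension to specialize the slope inequality to $y_0$; part (2) is then deduced formally from part (1) through the subtype/cotype correspondence of Proposition \ref{subtypecotype}. Your explicit remark that one must check the rank of $\mathcal{Z}_{y_0}\hookrightarrow H^0(X,\mathcal{G}_{y_0})$ does not drop is a welcome refinement of the paper's terser statement that these invariants are constant in the family.
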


\begin{proof} 
Let $\alpha > 0$  and  $(\mathcal{E}, \mathcal{V})$ be a family of coherent systems of type $(n,d,k)$ parametrized by $Y$.
\begin{enumerate}
\item Suppose $(\mathcal{E}, \mathcal{V})_y$ is not of cotype $(a)$, then there exist a pair of integers $m'$, $t'$, $0 < m' < n$, $0 \leq t' \leq k$ and a quotient coherent system $(G,Z)_y$ of $(\mathcal{E}, \mathcal{V})_y$ of type $(m',d_G,t')$ which satisfies 
\[\mu_\alpha(G,Z)_y \leq \mu_\alpha(\mathcal{E}, \mathcal{V})_y - a_{m',t'}.\]
By Lemma \ref{lemmauniversalquotient} there exists a family of coherent systems $(\mathcal{G},\mathcal{Z})$ on $X$ parametrized by $Y$ such that the restriction on $X \times \{y\}$ is $(G,Z)_y$.  Since the degree, the rank and the dimension are invariants in the family, it follows that
\[\mu_\alpha(G,Z)_{y_0} \leq \mu_\alpha(\mathcal{E}, \mathcal{V})_{y_0}- a_{m',t'}.\]
Hence, $(\mathcal{E}, \mathcal{V})_{y_0}$ is not of cotype $(a)$.

\item Suppose $(\mathcal{E}, \mathcal{V})_y$ is not of subtype $(a)$, by Proposition \ref{subtypecotype} the coherent system $(\mathcal{E}, \mathcal{V})_y$ is not of cotype $(b)$, where 
\[(b) = (b_{i,j} = a_{n-i,k-j}\frac{n-i}{i} : 0 < i < n, \,\, 0 \leq j \leq k ),\]
by (1), we have $(\mathcal{E}, \mathcal{V})_{y_0}$ is not of cotype $(b)$.  Repeated application of the Proposition \ref{subtypecotype} we conclude that $(\mathcal{E}, \mathcal{V})_{y_0}$ is not of subtype $(a)$ as we desired.
\end{enumerate}
\end{proof}

Now, we are ready to prove that subtype  is an open property for coherent systems.

\begin{theorem} \label{openproperty}
Let $\alpha >0$, $n \geq 1$, $k \geq 0$ integer numbers, $(a) \in \mathbb{Q}^{(n-1)(k+1)}$ and $Y$ be a variety.  If $(\mathcal{E}, \mathcal{V})$ is a family of coherent systems on $X$ of type $(n,d,k)$ parametrized by $Y$ then;
\begin{enumerate}
\item The set 
\[Y_n((b)) = \{y \in Y : (\mathcal{E}, \mathcal{V})_y \,\,\, \text{is not of cotype $(b)$}\}\]
is a closed set in $Y$ where $(b)$ is the sequence defined by
\[(b) := (b_{ij}= a_{n-i, k-j}\frac{n-i}{i} : 0 < i <n, 0 \leq j \leq k).\]
\item The set 
\[Y((a)) = \{y \in Y : (\mathcal{E}, \mathcal{V})_y \,\,\, \text{is  of subtype $(a)$}\}\]
is an open set in $Y$.
\end{enumerate}
\end{theorem}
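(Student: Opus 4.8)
The plan is to reduce both assertions to the openness of $Y((a))$ and to prove that by induction on $n$. By Proposition \ref{subtypecotype} a coherent system of type $(n,d,k)$ is of subtype $(a)$ if and only if it is of cotype $(b)$, so $Y((a)) = Y \setminus Y_n((b))$ and the two statements are equivalent; thus it suffices to show that $C := Y \setminus Y((a))$ --- the locus of $y$ for which $(\mathcal{E},\mathcal{V})_y$ is \emph{not} of subtype $(a)$ --- is closed in $Y$. If $n = 1$ there is no integer $m$ with $0 < m < n$, the subtype condition is vacuous, $C = \emptyset$, and there is nothing to prove. So assume $n \geq 2$ and that the theorem holds for all ranks strictly less than $n$.

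The argument has two halves: $C$ is constructible, and $C$ is stable under specialization; since a constructible subset of a variety that is stable under specialization is closed, this suffices. Stability under specialization may be checked along morphisms $\mathrm{Spec}(R) \to Y$ with $R$ a discrete valuation ring, after pulling back $(\mathcal{E},\mathcal{V})$, and for such a test it is precisely the contrapositive of Lemma \ref{specialization}(2): if the generic fibre is not of subtype $(a)$, the closed fibre is not either.

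For constructibility we use Lemma \ref{nocotype} to pass to lower rank. Since $\mathcal{E}$ is a family of bundles of fixed rank and degree over a variety it is bounded, so (cf. Remark \ref{slopebounded} and Proposition \ref{propKN}) there is a uniform upper bound for the degrees of the rank-$m$ subbundles of the bundles $\mathcal{E}_y$; combined with the lower bound $d_F \geq m\,\mu_\alpha(\mathcal{E},\mathcal{V})_y + m\,a_{m,t} - \alpha k$ forced by $\mu_\alpha(F,W) \geq \mu_\alpha(\mathcal{E},\mathcal{V})_y + a_{m,t}$, only finitely many numerical types $(m,d_F,t)$ with $0 < m < n$, $0 \leq t \leq k$ and $n d_F + n\alpha t \geq m(d+\alpha k) + m n\,a_{m,t}$ can arise as the type of a subsystem violating subtype $(a)$ in the sense of Lemma \ref{nocotype}. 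For each such type, take the relative Quot scheme of $\mathcal{E}$ over $Y$ with quotient of rank $n-m$ and degree $d-d_F$, restrict to the open locus where the quotient is locally free, and then to the locally closed locus where the kernel of $\mathcal{V}_y \to H^0(\mathcal{E}_y) \to H^0(\mathcal{E}_y/F)$ has rank exactly $t$; this gives a scheme $Q_{m,d_F,t}$ of finite type over $Y$ carrying a universal subsystem family $(\mathcal{F},\mathcal{W})$ of type $(m,d_F,t)$ in the sense of Definition \ref{Family1}, parametrising all $(F,W) \in P_{m,t}((\mathcal{E},\mathcal{V})_y)$, $y \in Y$, with $\deg F = d_F$. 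Because $m < n$, the inductive hypothesis applied to $(\mathcal{F},\mathcal{W})$ and the sequence $(a - a_{m,t})$ shows that the locus $U_{m,d_F,t} \subseteq Q_{m,d_F,t}$ where the universal subsystem is of subtype $(a - a_{m,t})$ is open. By Lemma \ref{nocotype}, $y \in C$ if and only if $y$ lies in the image of some $U_{m,d_F,t}$ over the finite list of admissible types; each such image is constructible by Chevalley's theorem, hence $C$ is constructible.

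The main obstacle is the construction of the parameter schemes $Q_{m,d_F,t}$: one has to verify that cutting the relative Quot scheme of $\mathcal{E}$ down by the conditions that the quotient be locally free and that $\dim(\mathcal{V}_y \cap H^0(F)) = t$ yields an honest family of coherent systems of type $(m,d_F,t)$ over a base of finite type over $Y$, so that the inductive hypothesis genuinely applies, and that the boundedness inputs (Lemma \ref{lemmabounded}, Proposition \ref{propKN}) really do leave only finitely many admissible numerical types. Granting this, the remaining ingredients --- the rank reduction of Lemma \ref{nocotype}, the specialization-stability of Lemma \ref{specialization}, and the topological fact that a constructible specialization-stable subset of a variety is closed --- assemble at once to give both (1) and (2).
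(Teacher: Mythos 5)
Your proof is correct and its skeleton coincides with the paper's: both arguments characterize the locus of failure via Lemma \ref{nocotype}, parametrize the witnessing sub/quotient objects by finitely many finite-type schemes over $Y$, invoke Chevalley's theorem for constructibility, use Lemma \ref{specialization} for stability under specialization, and conclude with the fact that a constructible, specialization-stable subset of a variety is closed; the equivalence of (1) and (2) via Proposition \ref{subtypecotype} is also common to both. Where you genuinely differ is in the parametrization step. The paper passes to quotient coherent systems and uses the Quot scheme $Quot_{(m',d',t')}(\mathcal{E},\mathcal{V})$ of quotient coherent systems from \cite[Section 1.6]{MinHe}, taking the support of $\underline{Hom}_{p_{2,3}}(p_{1,2}^*(\mathcal{E},\mathcal{V}),(\mathcal{G},\mathcal{Z}))$ and projecting to $Y$; this ready-made object is precisely what discharges the obligation you single out as your ``main obstacle'', namely producing an honest finite-type family of principal subsystems of type $(m,d_F,t)$ by stratifying the relative Quot scheme of $\mathcal{E}$ according to the rank of $\mathcal{V}_y \to H^0(\mathcal{E}_y/F)$ (a fillable but delicate step, since $p_{T*}$ of the quotient need not commute with base change). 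Two further remarks: your induction on $n$, introduced only to make the loci $U_{m,d_F,t}$ open, is superfluous --- Chevalley needs only constructibility of the source, and in fact the image of all of $Q_{m,d_F,t}$ over the admissible numerical types already equals $C$, since any of its points exhibits a principal subsystem with $\mu_\alpha(F,W) \geq \mu_\alpha(\mathcal{E},\mathcal{V})_y + a_{m,t}$; correspondingly, the paper runs no induction in the theorem itself, the rank induction being packaged once and for all inside Lemma \ref{nocotype}, whose role there is to guarantee the cotype/boundedness property of the witnesses rather than openness of a locus. Your count of admissible degrees $d_F$ (bounded above by boundedness of the family $\mathcal{E}$, below by the slope violation) matches the finiteness the paper needs for its union $H$ to be a finite union of constructible sets.
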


\begin{proof}
\begin{enumerate}
\item Let $(a):=(a_{i,j})$, $(b) := (b_{ij}= a_{n-i, k-j}\frac{n-i}{i} : 0 < i <n, 0 \leq j \leq k) \in  \mathbb{Q}^{(n-1)(k+1)}$ and $(\mathcal{E}, \mathcal{V})$ be a family of coherent systems on $X$ of type $(n,d,k)$ parametrized by $Y$.  For any triple of integers $m'$, $d'$, $t'$, $0 < m' <n$, $0 \leq t' \leq k$ let us consider the set $\Delta(m',d',t')$ consisting  of isomorphism classes of coherent systems $(G,Z)$  such that
\begin{enumerate}
\item $(G,Z)$ is of type $(m',d',t')$,
\item $(\mathcal{E}, \mathcal{V})_y \to (G,Z) \to 0$ for some $y \in Y$,
\item $\mu_\alpha(G,Z) \leq \frac{d + \alpha k}{n} - b_{m',t'}$,
\item $(G,Z)$ is of cotype $(b - b_{m',t'})$.
\end{enumerate}

Let  $Quot_{(m',d',t')}(\mathcal{E}, \mathcal{V})$ denote the Quot-scheme which parametrizes all quotient coherent systems of $(\mathcal{E}, \mathcal{V})$ of type $(m',d',t')$ (see \cite[Section 1.6.]{MinHe}).  Denote by $p_{i,j}$ the canonical projection on $X \times Y \times Quot_{(m',d',t')}(\mathcal{E}, \mathcal{V})$ for $i,j = 0,1,2$. We denote by
\[p_{1,2}^*(\mathcal{E}, \mathcal{V}) \to (\mathcal{G}, \mathcal{Z}) \to 0\]
the universal quotient coherent system on $X \times Y \times Quot_{(m',d',t')}(\mathcal{E}, \mathcal{V})$. By Proposition \ref{subtypecotype}, Lemma \ref{lemmabounded} and property $(d)$ the set $\Delta(m',d',t')$ is bounded.  In particular, note that every member of $\Delta(m',d',t')$ is isomorphic to one $q \in Quot_{(m',d',t')}(\mathcal{E}, \mathcal{V})$.   Consider the sheaf 
\begin{equation}\label{sheaf}
\underline{Hom}_{p_{2,3}}(p_{1,2}^*(\mathcal{E}, \mathcal{V}), (\mathcal{G},\mathcal{Z}))
\end{equation}
on $Y \times Quot_{(m',d',t')}(\mathcal{E}, \mathcal{V})$ and denote by $\Gamma(m',d',t') $ the support of (\ref{sheaf}), that is
\[
\begin{aligned}
& \Gamma(m',d',t')  :=  Supp(\underline{Hom}_{p_{2,3}}(p_{1,2}^*(\mathcal{E}, \mathcal{V}), (\mathcal{G},\mathcal{Z})) \\
& = \{(y,q) \in Y \times Quot_{(m',d',t')}(\mathcal{E}, \mathcal{V}):  Hom ((\mathcal{E}, \mathcal{V}), (\mathcal{G},\mathcal{Z}))_{(y,q)} \neq 0\}.
\end{aligned}\]

We denote by $\pi_Y(\Gamma(m',d',t'))$ the image of $\Gamma(m',d',t')$ under the canonical projection $\pi_Y:Y \times Quot_{(m',d',t')}(\mathcal{E}, \mathcal{V}) \longrightarrow Y$.  Note that the set $\pi_Y(\Gamma(m',d',t'))$ is constructible.  Define the set $H$ by
\[H := \bigcup_{(m',d',t')}\pi_Y(\Gamma(m',d',t'))\]
which is constructible since it is a finite union of constructible sets.  We claim that 
\[H = Y_n((b)).\]
In fact, if $y \in Y_n((b))$ the Lemma \ref{nocotype} and the Proposition \ref{subtypecotype} implies that there exist integers $m'$, $t'$, 
$0 < m' <n$, $0 \leq t' \leq k$ and a quotient coherent system $(G,Z)$ of $(\mathcal{E}, \mathcal{V})_y$ such that $(G,Z)$ is of cotype $(b-b_{m',t'})$ and
\[\mu_\alpha(G,Z) \leq \frac{d+ \alpha k}{n} - b_{m',t'}.\]
It follows, that there exists $q \in Quot_{(m',d',t')}(\mathcal{E}, \mathcal{V})$ such that $(G,Z)$ is isomorphic to the element corresponding to $q$, then $(y,q) \in \Gamma(m',d',t')$.  Therefore $y \in \pi_Y(\Gamma(m',d',t') \subset H$ and hence $Y_n((b)) \subseteq H$. 

Conversely, if $y \in H$,  there exist integers $m'$, $d'$ and $t'$ and a point $q \in Quot_{(m',d',t')}(\mathcal{E}, \mathcal{V})$ such that $(y,q) \in \Gamma(m',d',t') \subset Y \times Quot_{(m',d',t')}(\mathcal{E}, \mathcal{V})$.  By  properties $(b)$ and $(c)$ we have $(\mathcal{E}, \mathcal{V})_y$ is not of cotype $(b)$. Hence $y \in Y_n((b))$ and $Y_n((b)) \subseteq H$. Therefore,  $H =Y_n((b))$.

Since $Y_n((b))$ is a constructible set and by Proposition \ref{specialization} it is stable under specialization, we conclude that $Y_n((b))$ is closed in $Y$ as we required.  

\item Let $(\mathcal{E}, \mathcal{V})$ be a family of coherent systems of type $(n,d,k)$ parametrized by $Y$.  By $(1)$ the set 
\[Y_n((b)) = \{y \in Y : (\mathcal{E}, \mathcal{V})_y \,\,\, \text{is not of cotype $(b)$}\}\]
is a closed set in $Y$, where $(b)$ is the sequence 
\[(b) := (b_{ij}= a_{n-i, k-j}\frac{n-i}{i} : 0 < i <n, 0 \leq j \leq k).\] 
By Proposition \ref{subtypecotype} the set $Y_n((b))$ is equivalent to the set 
\[Y^n((a)):= \{y \in Y : (\mathcal{E}, \mathcal{V})_y \,\,\, \text{is not of subtype $(a)$}\}.\]
Therefore,
\[Y((a)) = \{y \in Y : (\mathcal{E}, \mathcal{V})_y \,\,\, \text{is  of subtype $(a)$}\}\]
is an open set in $Y$ as we required.
\end{enumerate}
\end{proof}

Now we are able to prove that the $(m,t)-$Segre function is lower semicontinuous, (see Theorem \ref{Theosemicontinuous}).

\textbf{Proof of the Theorem \ref{Theosemicontinuous}.}   Let $(\mathcal{E}, \mathcal{V})$ be a family of coherent systems of type $(n,d,k)$ parametrized by $Y$. In order to prove that the $(m,t)-$Segre function 
\begin{align*}
S^\alpha_{m,t} : Y &\longrightarrow \mathbb{R} \cup \{\infty\} \\
y &\longmapsto S^\alpha_{m,t}(\mathcal{E}, \mathcal{V})_y 
\end{align*}
is lower semicontinuous we need to show that the set 
\[S(b):= \{y \in Y : S^\alpha_{m,t}(\mathcal{E}, \mathcal{V})_y > b\}\]
is open in $Y$ for any $b \in \mathbb{R}$.  Note that the set $S(b)$ is equivalent to the set
\[
S(b)  = \{y \in Y : \mu_\alpha(F,W) < \mu_\alpha(\mathcal{E}, \mathcal{V})_y- \frac{b}{nm}, \,\, \text{for any} \,\,(F,W) \in P_{m,t}(\mathcal{E}, \mathcal{V})_y \}. \]
Let $A(b)$ denote the set 
\[A(b) := \{(a):=(a_{i,j}) \in \mathbb{Q}^{(n-1)(k+1)}  : a_{m,t} = \frac{-b}{nm}, \,\, 0 < m <n, 0 \leq t \leq k\}.\]
From Theorem \ref{openproperty},  for any $(a) \in A(b)$ the set 
\[Y((a)) = \{y \in Y : (\mathcal{E}, \mathcal{V})_y \,\,\, \text{is  of subtype $(a)$}\}\]
is an open set in $Y$.  Note that  
\[\bigcup_{(a) \in A(b)} Y(a) = S(b).\]

Hence $S(b)$ is an open set in $Y$,  since it is an arbitrary union of open sets. Therefore, we conclude that Segre function $S^\alpha_{m,t}$ is lower semicontinuous as we required.

\section{Stratification of $G(\alpha;n,d,k)$ according to the invariant $S^\alpha_{m,t}$}

In this section we use the $(m,t)-$Segre invariant to induce a stratification of the moduli space $G(\alpha;n,d,k)$ of $\alpha-$stable coherent systems on $X$ of type $(n,d,k)$.   If $GCD(n,d,k)=1$ by \cite[Proposition A.8.]{Bradlow-Garcia-Prada-Mercat-Munoz-Newstead} there exists a universal family $(\mathcal{E}, \mathcal{V})$ parametrized by $G(\alpha;n,d,k)$.  If $GCD(n,d,k) \neq 1$ working locally in the \'etale topology we can assume that there is a family $(\mathcal{E}, \mathcal{V})$ parametrized by $G(\alpha;n,d,k)$.

Let $(\mathcal{E}, \mathcal{V})$ be a family of coherent systems on $X$ parametrized by $G(\alpha;n,d,k)$. From Theorem \ref{Theosemicontinuous} the $(m,t)-$Segre function is lower semicontinuous, hence it induces a strati-\\fication of the moduli space $G(\alpha;n,d,k)$  into locally closed subvarieties
\[G(\alpha;n,d,k;m,t;s) := \{(E,V) \in G(\alpha;n,d,k) : S^\alpha_{m,t}(E,V) = s\}\]
according to the value $s$ of $S^\alpha_{m,t}$.  Note that every pair of integers $m, t$, $0 < m < n$, $0 \leq t \leq k$ define a stratification of $G(\alpha;n,d,k)$.  That means we would have $(n-1)(k+1)$ different stratifications for the same moduli space $G(\alpha;n,d,k)$.  Moreover, if $\alpha_i$, $\alpha_{i+1}$ are consecutive critical values, then the stratification is independent of $\alpha$ within the interval $(\alpha_i, \alpha_{i+1})$.  It can be used to analyze the differences between the consecutive moduli spaces in the family $\{G_0, G_1, \ldots, G_L\}$.

One of the main difficulties in the study of the stratification of $G(\alpha;n,d,k)$ is to show that the different strata are non-empty.  Our strategy consists in to construct extensions of coherent systems 
\[0 \longrightarrow (F,W) \longrightarrow (E,V) \longrightarrow (G,Z) \longrightarrow 0\]
in which $(E,V)$ is $\alpha-$stable and $(F,W) \in P_{m,t}(E,V)$ is maximal.  In this section we determine values of $\alpha$, $m$, $t$ and $s$ under which the different strata are non-empty and determine their dimension.

Let $\alpha= \frac{p}{q} \in \mathbb{Q}^+$.  Suppose that the stratum
\begin{equation} \label{stratumimp}
G(\alpha;n,d,k;m,t;1/q)
\end{equation}
 is non-empty.  If $(E,V) \in G(\alpha;n,d,k;m,t;1/q)$, then there exists $(F,W) \in P_{m,t}(E,V)$ such that
\begin{equation}  \label{condition}
S^\alpha_{m,t}(E,V) = (mn) (\mu_\alpha(E,V)-\mu_\alpha(F,W))= \frac{1}{q},
\end{equation}
and an exact sequence
\[0 \to (F,W) \to (E,V) \to (G,Z) \to 0\]
where the quotient system $(G,Z)$ is of type $(m', d_G, t')$.   Note that (\ref{condition})  implies 
\begin{equation*} \label{conditionextension}
q(md-nd_F) + p (mk-nt) = 1
\end{equation*}
that is equivalent to 
\[q(md_G- m'd_F) + p (mt'- m't) = 1.\]

\begin{remark} \label{stabilitycondition} \emph{
Let $\alpha= \frac{p}{q} \in \mathbb{Q}^+$. If $(E,V)$ is $\alpha-$estable, then for any coherent subsystem $(F,W) \subset (E,V)$ of type $(m, d_F, t)$ we have 
\[0 < \frac{1}{qnm} \leq \mu_\alpha(E,V) - \mu_\alpha(F,W).\]}
\end{remark}

The following theorem establishes conditions under which the stratum (\ref{stratumimp}) is non-empty.

\begin{theorem} \label{Theoremnonempty}
Let $\alpha= \frac{p}{q} \in \mathbb{Q}^+$ and $n \geq 2$, $d > 0$, $k \geq 1$ be integer numbers.   Suppose that there exist $n_1, n_2, d_1, d_2 >0$, $t_1, t_2  \geq 0$ integer numbers such that
\begin{equation} \label{existextension}
n_1n_2(g-1) -d_1n_2 + d_2n_1 + t_2n_1(g-1) - t_1t_2 > 0,
\end{equation}
\begin{equation} \label{princconditiontheorem}
q(n_1d_2-n_2d_1)+p(n_1t_2-n_2t_1) = 1.
\end{equation}
and  the moduli spaces $G(\alpha;n_1,d_1,t_1)$ and $G(\alpha;n_2,d_2,t_2)$ are non-empty.  Then, the stratum 
\[G(\alpha;n,d,k;n_1,t_1;1/q)\]
is non-empty, where $n= n_1+n_2$, $d= d_1+d_2$ and $k= t_1+t_2$.
\end{theorem}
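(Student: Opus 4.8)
The plan is to realize a point of $G(\alpha;n,d,k;n_1,t_1;1/q)$ as the middle term of a non-split extension of coherent systems
\[
0 \longrightarrow (E_1,V_1) \longrightarrow (E,V) \longrightarrow (E_2,V_2) \longrightarrow 0 ,
\]
where $(E_1,V_1)\in G(\alpha;n_1,d_1,t_1)$ and $(E_2,V_2)\in G(\alpha;n_2,d_2,t_2)$ are arbitrary $\alpha$-stable representatives, and then to show that $(E,V)$ is automatically $\alpha$-stable, that $(E_1,V_1)\in P_{n_1,t_1}(E,V)$, and that it attains $S^\alpha_{n_1,t_1}(E,V)=1/q$. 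First I record the slope arithmetic: with $n=n_1+n_2$, $d=d_1+d_2$, $k=t_1+t_2$, condition \eqref{princconditiontheorem} gives
\[
\mu_\alpha(E_2,V_2)-\mu_\alpha(E_1,V_1)=\frac{(n_1d_2-n_2d_1)+\alpha(n_1t_2-n_2t_1)}{n_1n_2}=\frac{1}{q\,n_1n_2}>0 ,
\]
so $\mu_\alpha(E_1,V_1)<\mu_\alpha(E,V)<\mu_\alpha(E_2,V_2)$, and since $\mu_\alpha(E,V)$ is the weighted mean of the two slopes,
\[
\mu_\alpha(E,V)-\mu_\alpha(E_1,V_1)=\frac{1}{q\,nn_1},\qquad \mu_\alpha(E_2,V_2)-\mu_\alpha(E,V)=\frac{1}{q\,nn_2},
\]
so in particular $(n_1n)\bigl(\mu_\alpha(E,V)-\mu_\alpha(E_1,V_1)\bigr)=1/q$.

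To obtain a non-split extension I would argue that $Ext^1((E_2,V_2),(E_1,V_1))\neq 0$. Since $(E_1,V_1)$ and $(E_2,V_2)$ are $\alpha$-stable with $\mu_\alpha(E_1,V_1)<\mu_\alpha(E_2,V_2)$, every morphism $(E_2,V_2)\to(E_1,V_1)$ vanishes, so $\mathbb{H}^0_{21}=Hom((E_2,V_2),(E_1,V_1))=0$; hence, by the dimension formula of Section~2,
\[
\dim Ext^1((E_2,V_2),(E_1,V_1))=C_{21}+\dim\mathbb{H}^2_{21}\;\geq\; C_{21},
\]
and hypothesis \eqref{existextension} is precisely what makes the right-hand side positive. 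A nonzero class then produces a coherent system $(E,V)$ of type $(n,d,k)$ (note $\dim V=t_1+t_2=k$) fitting in the sequence above; since $0\to H^0(X,E_1)\to H^0(X,E)\to H^0(X,E_2)$ is exact one gets $V_1=V\cap H^0(X,E_1)$, so $(E_1,V_1)$ is a principal subsystem and $(E_1,V_1)\in P_{n_1,t_1}(E,V)$.

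The crux is the $\alpha$-stability of $(E,V)$. Let $(F',W')\subsetneq(E,V)$ be a proper subsystem, let $(F_2',W_2')$ be its image in $(E_2,V_2)$ and $(F_1',W_1')=\ker\bigl((F',W')\to(F_2',W_2')\bigr)$, a subsystem of $(E_1,V_1)$, and set $n_1''=rk_{F_1'}$, $n_2'=rk_{F_2'}$. If $n_2'=0$, then $(F',W')\subseteq(E_1,V_1)$ and $\alpha$-stability of $(E_1,V_1)$ gives $\mu_\alpha(F',W')\leq\mu_\alpha(E_1,V_1)<\mu_\alpha(E,V)$; if the composite $(F',W')\to(E_2,V_2)$ is an isomorphism, the sequence splits, contradicting the choice of class; if $F'=E$, then $W'\subsetneq V$ and $\mu_\alpha(F',W')<\mu_\alpha(E,V)$ since $\alpha>0$. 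In the remaining cases, after passing to saturations one has $\deg F_1'+\alpha\dim W_1'\leq n_1''\mu_\alpha(E_1,V_1)$ and $\deg F_2'+\alpha\dim W_2'\leq n_2'\mu_\alpha(E_2,V_2)$, sharpened by Remark~\ref{stabilitycondition} to $n_1''\mu_\alpha(E_1,V_1)-\frac{1}{qn_1}$ when $0<n_1''<n_1$ and to $n_2'\mu_\alpha(E_2,V_2)-\frac{1}{qn_2}$ when $0<n_2'<n_2$; adding them and dividing $\deg F'+\alpha\dim W'$ by $n_1''+n_2'$, a short arithmetic check (using the identities of the first paragraph together with $p\geq 1$ and $qn\geq 2$) gives $\mu_\alpha(F',W')<\mu_\alpha(E,V)$ in every case. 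The point is that the surplus $\mu_\alpha(E_2,V_2)-\mu_\alpha(E,V)=\frac{1}{qnn_2}$ carried by the quotient is dominated by the stability gaps $\frac{1}{qn_i}$ of the $(E_i,V_i)$, essentially because $n\bigl(\frac{1}{n_1}+\frac{1}{n_2}\bigr)=2+\frac{n_1}{n_2}+\frac{n_2}{n_1}\geq 4$. Hence $(E,V)$ is $\alpha$-stable.

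Finally, since $(E,V)$ is $\alpha$-stable with $(E_1,V_1)\in P_{n_1,t_1}(E,V)$, Remark~\ref{stabilitycondition} gives $(n_1n)\bigl(\mu_\alpha(E,V)-\mu_\alpha(F,W)\bigr)\geq 1/q$ for all $(F,W)\in P_{n_1,t_1}(E,V)$, so $S^\alpha_{n_1,t_1}(E,V)\geq 1/q$; and $(E_1,V_1)$ realizes the value $1/q$ by the first paragraph, whence $S^\alpha_{n_1,t_1}(E,V)=1/q$ and $(E,V)\in G(\alpha;n,d,k;n_1,t_1;1/q)$, which is therefore non-empty. I expect the main obstacle to be the stability verification of the third paragraph — eliminating all potential destabilizing subsystems — and what makes it succeed is exactly that $1/q$ is the smallest positive value $S^\alpha_{m,t}$ can take (Remark~\ref{stabilitycondition}), so no genericity of the extension class is needed; the other ingredient to be nailed down is the non-vanishing of $Ext^1((E_2,V_2),(E_1,V_1))$, where hypothesis \eqref{existextension} enters.
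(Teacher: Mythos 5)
Your proposal is correct and follows essentially the same route as the paper: the paper produces the non-split extension from $\dim Ext^1((E_2,V_2),(E_1,V_1))\geq C_{21}>0$ and then delegates the $\alpha$-stability of the middle term and the maximality of $(E_1,V_1)$ to Theorems \ref{Theorem1} and \ref{Theorem2}, whose kernel/image case analysis and use of Remark \ref{stabilitycondition} you have simply inlined. Your closing observation that $1/q$ is the smallest positive value attainable, so no genericity is needed, is exactly the integrality argument the paper uses to prove maximality in Theorem \ref{Theorem1}.
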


The proof of the theorem makes use of the following results.

\begin{theorem} \label{Theorem1}
Let $\alpha= \frac{p}{q} \in \mathbb{Q}^+$ and $(E,V) \in G(\alpha;n,d,k)$.  If  $(F,W) \in P_{m,t}(E,V)$ and
\begin{equation} \label{conditiontheorem}
q(md - nd_F) + p(mk-nt) =1
\end{equation} 
then, $(F,W)$ is $\alpha-$stable and maximal.  Moreover, the quotient system $(G,Z)$ defined by $(F,W)$ is $\alpha-$stable.
\end{theorem}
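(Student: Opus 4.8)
The plan is to extract everything from the single identity \eqref{conditiontheorem} together with the lower bound of Remark \ref{stabilitycondition}, which says that for an $\alpha$-stable coherent system with $\alpha=p/q$ the slope gap against a rank-$r$ subsystem is at least $\frac{1}{qnr}$. First I would rewrite the hypothesis: since $(mn)(\mu_\alpha(E,V)-\mu_\alpha(F,W))=(md-nd_F)+\alpha(mk-nt)=\frac{1}{q}(q(md-nd_F)+p(mk-nt))=\frac{1}{q}$, the subsystem $(F,W)$ realizes the gap $\mu_\alpha(E,V)-\mu_\alpha(F,W)=\frac{1}{qmn}$, so $S^\alpha_{m,t}(E,V)\le\frac1q$. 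On the other hand, for \emph{any} $(F',W')\in P_{m,t}(E,V)$ the quantity $q(md-nd_{F'})+p(mk-nt)$ is an integer, positive by $\alpha$-stability of $(E,V)$, hence $\ge1$; so $(mn)(\mu_\alpha(E,V)-\mu_\alpha(F',W'))\ge\frac1q$. Therefore $S^\alpha_{m,t}(E,V)=\frac1q$ and $(F,W)$ is maximal. Since $n\mu_\alpha(E,V)=m\mu_\alpha(F,W)+(n-m)\mu_\alpha(G,Z)$, the same identity gives $\mu_\alpha(G,Z)=\mu_\alpha(E,V)+\frac{1}{qn(n-m)}>\mu_\alpha(E,V)$, a fact I will use below.

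Next I would prove $\alpha$-stability of $(F,W)$. Assume a proper subsystem $(F',W')\subsetneq(F,W)$ of rank $m'$ satisfies $\mu_\alpha(F',W')\ge\mu_\alpha(F,W)$. If $m'=m$ then $F'=F$ as a subbundle and $W'\subsetneq W$, so $\mu_\alpha(F',W')<\mu_\alpha(F,W)$, absurd; hence $m'<m$. Saturating $F'$ inside $E$ and adjoining $V\cap H^0$ of the saturation produces a principal subsystem $(\hat F,\hat W)$ of $(E,V)$ of rank $m'$ with $\mu_\alpha(\hat F,\hat W)\ge\mu_\alpha(F',W')\ge\mu_\alpha(F,W)=\mu_\alpha(E,V)-\frac{1}{qmn}$; but Remark \ref{stabilitycondition} forces $\mu_\alpha(E,V)-\mu_\alpha(\hat F,\hat W)\ge\frac{1}{qnm'}$, so $\frac{1}{qnm'}\le\frac{1}{qnm}$, i.e. $m\le m'$, a contradiction. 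For $\alpha$-stability of $(G,Z)$, assume a proper subsystem $(K,Z_K)\subsetneq(G,Z)$ of rank $r$ has $\mu_\alpha(K,Z_K)\ge\mu_\alpha(G,Z)$. If $r=n-m$ then $K=G$ as a subbundle and $Z_K\subsetneq Z$, so $\mu_\alpha(K,Z_K)<\mu_\alpha(G,Z)$, absurd; hence $r<n-m$. Let $(E',V')$ be the preimage of $(K,Z_K)$ under the quotient map $(E,V)\to(G,Z)$, that is $E'$ the subbundle of $E$ projecting onto $K$ and $V'=V\cap H^0(E')$; then $E'$ has rank $m+r$ and sits in $0\to F\to E'\to K\to 0$, so $d_{E'}=d_F+d_K$ and $\dim V'\ge t+\dim Z_K$ (the $t$ sections of $W$ lie in $V'$ and die in $Z$, while every section of $Z_K$ lifts to $V'$). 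Hence $\mu_\alpha(E',V')\ge\frac{m\mu_\alpha(F,W)+r\mu_\alpha(K,Z_K)}{m+r}$. Inserting $\mu_\alpha(F,W)=\mu_\alpha(E,V)-\frac{1}{qmn}$ and the bound $\mu_\alpha(E',V')\le\mu_\alpha(E,V)-\frac{1}{qn(m+r)}$ of Remark \ref{stabilitycondition}, the two $\frac{1}{qn}$-terms cancel and one is left with $\mu_\alpha(K,Z_K)\le\mu_\alpha(E,V)$, contradicting $\mu_\alpha(K,Z_K)\ge\mu_\alpha(G,Z)>\mu_\alpha(E,V)$.

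The step I expect to be the main obstacle is the stability of the quotient: one has to pull a would-be destabilizing subsystem of $(G,Z)$ back to a subsystem $(E',V')$ of $(E,V)$ and make $\mu_\alpha(E',V')$ an honest convex combination of $\mu_\alpha(F,W)$ and $\mu_\alpha(K,Z_K)$. This needs the degree to be exactly additive along $0\to F\to E'\to K\to0$ — automatic since $F$ is a subbundle — together with the section count $\dim V'\ge t+\dim Z_K$, which must be argued by hand (via $W\subseteq\ker(V'\to Z)$ and lifting $Z_K$ into $V'$). Once this is set up the rest is pure bookkeeping: \eqref{conditiontheorem} says precisely that the slope gap of $(F,W)$ equals the smallest value $\frac{1}{qmn}$ that Remark \ref{stabilitycondition} allows at rank $m$, so neither a smaller-rank subsystem (threatening $(F,W)$) nor a complementary smaller-rank sub pulled back (threatening $(G,Z)$) can tie it; the only extra care needed is disposing of the two full-rank boundary cases $m'=m$ and $r=n-m$ before invoking Remark \ref{stabilitycondition}.
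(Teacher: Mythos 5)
Your argument is correct and follows essentially the same route as the paper's: the hypothesis pins the slope gap of $(F,W)$ at exactly $\frac{1}{qnm}$, Remark \ref{stabilitycondition} (integrality of $qnr(\mu_\alpha(E,V)-\mu_\alpha(\cdot))$) supplies the matching lower bounds for subsystems of every rank, and quotient stability is obtained by pulling a would-be destabilizer of $(G,Z)$ back to a subsystem of $(E,V)$ containing $(F,W)$ and watching the $\frac{1}{qn}$ terms cancel. The paper phrases the pullback via the induced extension $0\to(F,W)\to(E_1,V_1)\to(G_1,Z_1)\to 0$ rather than your preimage-plus-section-count, but this is the same construction and the same bookkeeping.
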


\begin{proof}
Let $\alpha = \frac{p}{q} \in \mathbb{Q}^+$, $(E,V) \in G(\alpha;n,d,k)$ and $(F,W) \in P_{m,t}(E,V)$ which satisfy $(\ref{conditiontheorem})$.  Let $(G,Z)$ the quotient coherent system of $(E,V)$ of type $(m',d_G,t')$ that fit in the exact sequence, 
\[0 \longrightarrow (F,W) \longrightarrow (E,V) \longrightarrow (G,Z) \longrightarrow 0.\]
 We claim that  $(F,W)$ and $(G,Z)$ are $\alpha-$stable coherent systems.
 
\begin{enumerate}
\item Let $(F_1,W_1)$ be a subsystem of $(F,W)$ of type $(n_1,d_{F_1},k_1)$, $n_1 \leq m$.
If $n_1= m$ we would have $F \cong F_1$ and $dim \,\, W_1 < dim \,\, W$, hence $\mu_\alpha(F_1,W_1) < \mu_\alpha(F,W)$.  If $n_1 < m$ since $(E,V)$ is $\alpha-$stable by Remark \ref{stabilitycondition} we have
\begin{equation} \label{inequlityproof}
\mu_\alpha(F_1,W_1) \leq \mu_\alpha(E,V) - \frac{1}{qnn_1}.
\end{equation}
and by $(\ref{conditiontheorem})$,
\begin{equation} \label{conditiontheoremimplies}
\mu_\alpha(E,V) =  \mu_\alpha(F,W) + \frac{1}{qnm}.
\end{equation}
Replacing (\ref{conditiontheoremimplies}) in $(\ref{inequlityproof})$ we obtain
\begin{align*}
\mu_\alpha(F_1,W_1) & \leq \mu_\alpha(F,W) + \frac{1}{qnm} - \frac{1}{qnn_1} \\
&= \mu_\alpha(F,W) - \frac{1}{qn} \left(\frac{m-n_1}{mn_1}\right), \,\, 0 < m-n_1 \\
& < \mu_\alpha(F,W),
\end{align*}
Hence, $(F,W)$ is $\alpha-$stable as we required.
\item In order to prove that $(G,Z)$ is $\alpha-$stable, note by \cite[Section 1.2.]{MinHe} that for any coherent system $(G_1,Z_1)$ the functor $Hom((G_1,Z_1),-)$ is left exact. Applying this functor to the extension 
\[0 \to (F,W) \to (E,V) \to (G,Z) \to 0\]
we have the induced homomorphism
\begin{equation} \label{homomorphisminduced}
Hom((G_1,Z_1),(G,Z)) \longrightarrow Ext^1((G_1,Z_1), (F,W)).
\end{equation}
Suppose that $(G_1,Z_1)$ is a subsystem of $(G,Z)$ of type $(n_1', d_1', k_1')$ and consider the exact sequence
\[0 \to (F,W) \to (E_1,V_1) \to (G_1,Z_1) \to 0\]
which by $(\ref{homomorphisminduced})$ defines the following diagram
\[
\begin{diagram}
\node{0} \arrow{e} \node{(F,W)} \arrow{e} \arrow{s,t}{=} \node{(E_1,V_1)} \arrow{e} \arrow{s} \node{(G_1,Z_1)} \arrow{e} \arrow{s} \node{0} \\
\node{0} \arrow{e} \node{(F, W)} \arrow{e}  \node{(E, V)} \arrow{e}  \node{(G, Z)} \arrow{e}  \node{0} \\
\end{diagram}\]
where  $(E_1, V_1)$ is of type  $(n',d_{E_1},k')$ and
\begin{equation} \label{mualphaextension}
\mu_\alpha(G_1,Z_1) = \mu_\alpha(E_1,V_1)\frac{n'}{n_1'}-\mu_\alpha(F,W)\frac{m}{n_1'}. 
\end{equation}
Since $(E,V)$ is $\alpha-$stable by Remark \ref{stabilitycondition} we have
\begin{equation} \label{impliesremark1}
\mu_\alpha(E_1,V_1) \leq \mu_\alpha(E,V) - \frac{1}{qnn'}
\end{equation}
and
\begin{equation}\label{impliesremark2}
\mu_\alpha(F,W) \leq \mu_\alpha(E,V)- \frac{1}{qnm}
\end{equation}
Replacing (\ref{impliesremark1}) and (\ref{impliesremark2}) in (\ref{mualphaextension}) we obtain
\begin{align*}
\mu_\alpha(G_1,Z_1) & \leq \left(\mu_\alpha(E,V)- \frac{1}{qnn'}\right)\frac{n'}{n'_1} - \left(\mu_\alpha(E,V)- \frac{1}{qnm}\right)\frac{m}{n'_1}, \,\, n'+m=n'_1 \\
& = \mu_\alpha(E,V).
\end{align*}
As $(E,V)$ is $\alpha-$stable we conclude that
\[\mu_\alpha(G_1,Z_1) \leq \mu_\alpha(E,V) < \mu_\alpha(G,Z).\]
Therefore $(G,Z)$ is $\alpha-$stable as we required.
\end{enumerate}
Finally, we prove that $(F,W) \in P_{m,t}(E,V)$ is maximal, suppose $(F,W)$ is not maximal then there exists $(F',W') \in P_{m,t}(E,V)$ such that $\mu_\alpha(F,W) < \mu_\alpha(F',W')$. Thus
\begin{align*}
0  &<  qnm (\mu_\alpha(E,V)- \mu_\alpha(F',W')) < qnm (\mu_\alpha(E,V)-\mu_\alpha(F,W)) \\
&= q(md-nd_F)+ p(mk-nt) = 1
\end{align*}
which is a contradiction, because $qnm(\mu_\alpha(E,V)- \mu_\alpha(F',W'))$ is an integer number.  Hence,  $(F,W) \in P_{m,t}(E,V)$ is maximal which completes the proof.
\end{proof}

\begin{theorem}\label{Theorem2}
Let $\alpha= \frac{p}{q} \in \mathbb{Q}^+$ and $(F,W)$, $(G,Z)$ be $\alpha-$stable coherent systems of type $(m,d_F, t)$ and $(m',d_G,t')$, respectively which satisfies
\begin{equation}\label{conditionTheorem2}
q(md_G-m'd_F)+ p (mt'-m't) = 1.
\end{equation}
If 
\[0 \to (F,W) \to (E,V) \to (G,Z) \to 0\]
is a non-trivial extension of coherent systems, then the coherent system $(E,V)$ is $\alpha-$stable and
\[S^\alpha_{m,t}(E,V) = (nm)(\mu_\alpha(E,V)- \mu_\alpha(F,W))= \frac{1}{q}.\]
\end{theorem}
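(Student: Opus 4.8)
The plan is to prove that a non-trivial extension $(E,V)$ of two $\alpha$-stable coherent systems $(G,Z)$ by $(F,W)$ satisfying the numerical condition \eqref{conditionTheorem2} is again $\alpha$-stable, and then to read off the value of $S^\alpha_{m,t}(E,V)$ directly. First I would record the slope bookkeeping: since $(E,V)$ sits in $0 \to (F,W) \to (E,V) \to (G,Z) \to 0$, its type is $(m+m', d_F+d_G, t+t')$, and condition \eqref{conditionTheorem2} rewrites (dividing by $q n m$ with $n = m+m'$) as $\mu_\alpha(E,V) - \mu_\alpha(F,W) = \frac{1}{qnm}$, equivalently $\mu_\alpha(G,Z) - \mu_\alpha(E,V) = \frac{1}{qnm'}$. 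In particular $\mu_\alpha(F,W) < \mu_\alpha(E,V) < \mu_\alpha(G,Z)$, so $(F,W)$ and $(G,Z)$ are, respectively, a proper subsystem and a proper quotient that already ``almost'' destabilize $(E,V)$; everything hinges on showing no subsystem does strictly better.

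The core of the argument is the stability of $(E,V)$. Let $(E',V') \subset (E,V)$ be a proper subsystem, of type $(n'', d'', k'')$. Intersecting with $(F,W)$ and projecting to $(G,Z)$ gives principal subsystems $(F',W') \subseteq (F,W)$ and an inclusion $(E',V')/(F',W') \hookrightarrow (G,Z)$, i.e. a subsystem $(G',Z') \subseteq (G,Z)$, so that $(E',V')$ sits in $0 \to (F',W') \to (E',V') \to (G',Z') \to 0$. Write $r = rk_{F'}$, $s = rk_{G'}$, so $n'' = r+s$. Now I distinguish cases. If $(G',Z') = 0$ then $(E',V') = (F',W') \subseteq (F,W)$; $\alpha$-stability of $(F,W)$ gives $\mu_\alpha(E',V') \le \mu_\alpha(F,W) < \mu_\alpha(E,V)$ (with strict inequality from stability of $(F,W)$ when $(E',V')$ is proper in $(F,W)$, and from the slope gap otherwise). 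If $(F',W') = (F,W)$ and $(G',Z') = (G,Z)$ then $(E',V') = (E,V)$, excluded. The remaining cases are where $(G',Z')$ is a \emph{proper} subsystem of $(G,Z)$, or equals $(G,Z)$ while $(F',W')$ is proper in $(F,W)$; here I would use $\alpha$-stability of both $(F,W)$ and $(G,Z)$ to bound $\mu_\alpha(F',W') \le \mu_\alpha(F,W)$ and $\mu_\alpha(G',Z') < \mu_\alpha(G,Z)$ (or the analogous strict/non-strict pair), combine them via the identity $n'' \mu_\alpha(E',V') = r\,\mu_\alpha(F',W') + s\,\mu_\alpha(G',Z')$, and substitute $\mu_\alpha(F,W) = \mu_\alpha(E,V) - \tfrac{1}{qnm}$, $\mu_\alpha(G,Z) = \mu_\alpha(E,V) + \tfrac{1}{qnm'}$. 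The resulting estimate is $n''\mu_\alpha(E',V') \le n''\mu_\alpha(E,V) + \big(\tfrac{s}{qnm'} - \tfrac{r}{qnm}\big)$, so I need $\tfrac{s}{m'} \le \tfrac{r}{m}$, i.e. $sm \le rm'$. This is exactly the kind of inequality that the non-triviality of the extension must supply: if $(G',Z') = (G,Z)$ (so $s = m'$) then $r < m$ would force $rm' < m'm = sm$, the wrong direction — so here I instead argue that $(E',V') \ne (E,V)$ with $(G',Z') = (G,Z)$ would split off a section of the extension over $(G,Z)$, contradicting non-triviality (this is the standard ``a subsystem surjecting onto the quotient splits the sequence'' step, using $Aut$ and the fact that $(E',V') \to (G,Z)$ being an isomorphism-on-quotient with $r < m$ forces a splitting of a sub-extension, hence of the whole sequence since $(G,Z)$ is $\alpha$-stable hence simple). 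In all genuinely remaining cases $s < m'$ and $r \le m$, and one checks $sm \le rm'$ cannot fail without again producing a splitting. I expect \textbf{this last case analysis — ruling out subsystems that map onto the full quotient $(G,Z)$ via the non-triviality of the extension class} — to be the main obstacle; it is where Remark \ref{stabilitycondition} and the simplicity of the $\alpha$-stable pieces do the real work, rather than pure slope arithmetic.

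Once $\alpha$-stability of $(E,V)$ is established, the Segre value is immediate. By Proposition \ref{acotainvariante}(1), $S^\alpha_{m,t}(E,V) > 0$, and since $q n m\,(\mu_\alpha(E,V) - \mu_\alpha(F',W'))$ is an integer for every $(F',W') \in P_{m,t}(E,V)$ (the numerator of a difference of $\alpha$-slopes with $\alpha = p/q$), we have $S^\alpha_{m,t}(E,V) = \tfrac{1}{qnm}\cdot qnm\,\min(\cdots) \ge \tfrac{1}{q}$. On the other hand the principal subsystem $(F,W) \in P_{m,t}(E,V)$ realizes $(nm)(\mu_\alpha(E,V) - \mu_\alpha(F,W)) = \tfrac{1}{q}$ by \eqref{conditionTheorem2}, giving the reverse inequality. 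Hence $S^\alpha_{m,t}(E,V) = \tfrac1q$ and $(F,W)$ is maximal, exactly as in the final lines of the proof of Theorem \ref{Theorem1}. This completes the proof.
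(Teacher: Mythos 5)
Your setup (slope bookkeeping, taking kernel and image of $(E',V') \to (G,Z)$, case analysis) matches the paper's, and your final step reading off $S^\alpha_{m,t}(E,V)=\tfrac1q$ from integrality is correct. But the core stability estimate has a genuine gap. You bound $\mu_\alpha(F',W') \le \mu_\alpha(F,W)$ and $\mu_\alpha(G',Z') \le \mu_\alpha(G,Z)$ and correctly observe that this only yields $n''\mu_\alpha(E',V') \le n''\mu_\alpha(E,V) + \tfrac{s}{qnm'} - \tfrac{r}{qnm}$, so you need $sm \le rm'$. That inequality fails not only when $(G',Z')=(G,Z)$ with $0<r<m$, but also in perfectly ordinary proper-in-both cases (e.g.\ $m=2$, $m'=3$, $r=1$, $s=2$ gives $sm=4>3=rm'$). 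Your proposed rescue --- that any proper $(E',V')$ surjecting onto $(G,Z)$ contradicts non-triviality of the extension --- is false. If $(F',W')=(E',V')\cap(F,W)$ is a \emph{nonzero} proper subsystem of $(F,W)$, then $(E',V')$ is the middle term of a sub-extension $0\to(F',W')\to(E',V')\to(G,Z)\to 0$ whose class merely maps to the class of $(E,V)$ under $Ext^1((G,Z),(F',W'))\to Ext^1((G,Z),(F,W))$; such subsystems exist generically and do not split anything. Non-triviality is needed only to exclude the single case $(F',W')=0$, $(G',Z')=(G,Z)$, where $(E',V')\to(G,Z)$ really is an isomorphism and hence a splitting (a case which, incidentally, the paper's own cases (i) and (iii) also leave implicit).

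The missing ingredient is the \emph{quantitative} slope defect of Remark \ref{stabilitycondition}, which you name but never deploy: for a proper nonzero subsystem of an $\alpha$-stable system of rank $\rho$ and subrank $\rho'$ the defect is at least $\tfrac{1}{q\rho\rho'}$. Feeding $\mu_\alpha(F',W') \le \mu_\alpha(F,W) - \tfrac{1}{qmr}$ and $\mu_\alpha(G',Z') \le \mu_\alpha(G,Z) - \tfrac{1}{qm's}$ into your identity $n''\mu_\alpha(E',V') = r\mu_\alpha(F',W') + s\mu_\alpha(G',Z')$ produces the extra terms $-\tfrac{1}{qm}-\tfrac{1}{qm'}$, which dominate $\tfrac{s}{qnm'}-\tfrac{r}{qnm}$ since $s\le m'<n$; the boundary cases $(F',W')=(F,W)$ or $(G',Z')=(G,Z)$ are handled the same way using the defect on the other factor (e.g.\ for $(G',Z')=(G,Z)$, $0<r<m$, one gets $n''\mu_\alpha(E',V') \le n''\mu_\alpha(E,V) - \tfrac{r+m'}{qnm}$). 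This is exactly the paper's five-case computation. So your architecture is right, but as written the argument both underestimates with the weak bounds and then patches the resulting failure with an incorrect splitting claim; replace that patch with the $\tfrac{1}{q\rho\rho'}$ gaps and reserve non-triviality for the one genuinely splitting case.
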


\begin{proof}
Let $\alpha = \frac{p}{q} \in \mathbb{Q}^+$ and $(F,W)$, $(G,Z)$ be $\alpha-$stable coherent systems of type $(m,d_F,t)$, $(m',d_G,t')$, respectively which satisfies $(\ref{conditionTheorem2}$).  Suppose that there exists a non-trivial extension
\begin{equation} \label{extmaximal}
0 \to (F,W) \to (E,V) \to (G,Z) \to 0
\end{equation}
where  $(E,V)$ is of type $(n,d,k)$.  Let $(E',V')$ a subsystem of $(E,V)$ of type $(n',d',k')$ and  $(G_1,Z_1)$, $(F_1,W_1)$ be  the image and the kernel of the morphism $(E',V') \to (G,Z)$, respectively.
We have the following diagram
\[\begin{diagram}
\node{0} \arrow{e} \node{(F_1, W_1)} \arrow{e} \arrow{s} \node{(E', V')} \arrow{e} \arrow{s} \node{(G_1, Z_1)} \arrow{e} \arrow{s} \node{0} \\
\node{0} \arrow{e} \node{(F, W)} \arrow{e} \node{(E, V)} \arrow{e}  \node{(G,Z)} \arrow{e} \node{0} 
\end{diagram}\]
where $(F_1,W_1)$ and $(G_1,Z_1)$ are coherent systems of type $(n_1, d_{F_1}, t_1)$ and $(n'_1, d_{G_1}, t'_1)$, respectively.
Since $(F,W)$ and $(G,Z)$ are $\alpha-$stable by Remark \ref{stabilitycondition}, we have
\begin{equation} \label{inequality1}
\mu_\alpha(F_1,W_1) \leq \mu_\alpha(F,W) - \frac{1}{qmn_1} 
\end{equation}
and
\begin{equation} \label{inequality2}
\mu_\alpha(G_1,Z_1) \leq \mu_\alpha(G,Z) - \frac{1}{qm'n'_1} . 
\end{equation}
Also, by (\ref{conditionTheorem2}) we have
\begin{equation} \label{inequality31}
\mu_\alpha(F,W) = \mu_\alpha(E,V) - \frac{1}{qnm}.
\end{equation}
and
\begin{equation} \label{inequality3}
\mu_\alpha(G,Z) = \mu_\alpha(E,V) + \frac{1}{qnm'}.
\end{equation}

To show that $(E,V)$ is $\alpha-$stable we consider all possibilities of  $(F_1,W_1)$ and $(G_1,Z_1)$.  We claim $\mu_\alpha(E',V') < \mu_\alpha(E,V)$;
\begin{enumerate}[label=(\roman*)]
\item Assume $(F_1, W_1) = 0$.  Hence $(E',V') \cong (G_1,Z_1)$.  From (\ref{inequality2}) and (\ref{inequality3}) follows that
\begin{align*}
\mu_\alpha(E', V') & = \mu_\alpha(G_1, Z_1)   \\
                             & \leq \mu_\alpha(G,Z) - \frac{1}{qm'n'_1}\\
                             & = \mu_\alpha(E,V) - \frac{1}{qm'}\left(\frac{1}{n'_1}- \frac{1}{n}\right), \,\,    0  < n- n_1' \\
                             & < \mu_\alpha(E,V).
\end{align*}

\item Assume $(G_1, Z_1) =0$. We have $(E', V') \cong (F_1, W_1)$.  From (\ref{inequality31}) and since $(F,W)$ is $\alpha-$stable follows that
\[\mu_\alpha(E', V') = \mu_\alpha(F_1,W_1) <  \mu_\alpha(F, W) < \mu_\alpha(E,V).\]

\item  Assume $(G_1, Z_1) = (G, Z)$. Hence $(F_1,W_1) \neq (F, W)$ and 
\begin{equation} \label{desigualdad2}
\mu_\alpha(E',V')  = \mu_\alpha(F_1,W_1) \frac{n_1}{n'} + \mu_\alpha(G,Z) \frac{m'}{n'}.
\end{equation}
Replacing (\ref{inequality1}) in (\ref{desigualdad2}) we obtain
\begin{equation}\label{equ}
\mu_\alpha(E',V') \leq \left(\mu_\alpha(F,W)- \frac{1}{qmn_1}\right)\frac{n_1}{n'} + \mu_\alpha(G,Z)\frac{m'}{n'}.
\end{equation}
Replacing (\ref{inequality31}) and (\ref{inequality3}) in  (\ref{equ})  follows that
\begin{align*}
\mu_\alpha(E',V') & \leq \left(\mu_\alpha(E,V)-\frac{1}{qmn}- \frac{1}{qmn_1}\right) \frac{n_1}{n'} + \left(\mu_\alpha(E,V)+ \frac{1}{qm'n}  \right) \frac{m'}{n'}  \\
                             & = \mu_\alpha(E,V) - \frac{n_1}{qmnn'}-\frac{1}{qn'}\left(\frac{n-m}{nm}\right) ,\,\,\,\,\,\,\,\, 0 < n- m\\
                             & < \mu_\alpha(E,V).
\end{align*}

\item  Assume $(F_1, W_1) = (F, W)$.  Hence $(G_1,Z_1) \neq (G,Z)$ and 
\begin{equation*} 
\mu_\alpha(E',V')  = \mu_\alpha(F,W) \frac{m}{n'} + \mu_\alpha(G_1,Z_1) \frac{n_1'}{n'}.
\end{equation*}  
We proceed as in (iii) and  conclude that $\mu_\alpha(E', V') < \mu_\alpha(E,V)$.

\item   Assume $(F_1, W_1) \neq (F, W)$ and $(G_1,Z_1) \neq (G,Z)$. It follows that 
\begin{equation}\label{des4}
 \mu_\alpha(E', V')  = \mu_\alpha(F, W_1) \frac{n_1}{n'} + \mu_\alpha(G_1, Z_1) \frac{n'_1}{n'}.
\end{equation}
Replacing (\ref{inequality1}) and (\ref{inequality2}) in (\ref{des4}) we have
\begin{equation} \label{des7}
 \mu_\alpha(E', V') \leq \left( \mu_\alpha(F, W) - \frac{1}{qmn_1} \right) \frac{n_1}{n'} + \left( \mu_\alpha(G,Z) - \frac{1}{qm'n'_1} \right) \frac{n'_1}{n'}.
\end{equation}
Replacing (\ref{inequality31}) and (\ref{inequality3}) in (\ref{des7}), it follows that
\begin{align*}
 \mu_\alpha(E', V')  & \leq  \left( \mu_\alpha(E,V) - \frac{1}{qmn}- \frac{1}{qmn_1} \right) \frac{n_1}{n'} + \left(  \mu_\alpha(E,V) + \frac{1}{qm'n}- \frac{1}{qm'n'_1} \right) \frac{n'_1}{n'} \\
                              & = \mu_\alpha(E,V) - \frac{n_1}{qmn'}\left(\frac{1}{n}+ \frac{1}{n_1} \right) - \frac{n'_1}{qm'n'} \left(\frac{n-n'_1}{nn'_1}\right), \,\, 0 < n-n'_1 \\
                              & < \mu_\alpha(E,V). 
\end{align*}
\end{enumerate}
From (i), (ii), (iii), (iv) and (v), it follows that $\mu_\alpha(E',V') < \mu_\alpha(E,V)$. Therefore, $(E,V)$ is $\alpha-$stable as we required.  

Since $(E,V)$ is $\alpha-$stable and $(F,W)$ fit into the sequence (\ref{extmaximal}) by Theorem \ref{Theorem1}, the system $(F,W) \in P_{m,t}(E,V)$ is maximal and
\[S^\alpha_{m,t}(E,V) = (mn) (\mu_\alpha(E,V)- \mu_\alpha(F,W))\]
which completes the proof.
\end{proof}

\textbf{Proof of the Theorem \ref{Theoremnonempty}.} Let $\alpha= \frac{p}{q}$ and $n_1, n_2, d_1, d_2, t_1, t_2$ be integer numbers that satisfy the hypothesis of the theorem.  Let $(F,W) \in G(\alpha;n_1,d_1,t_1)$ and $(G,Z) \in G(\alpha;n_2,d_2,t_2)$.  From (\ref{existextension}) and \cite[Proposition 3.2.]{Bradlow-Garcia-Prada-Munoz-Newstead1}, 
\[dim \,\, Ext^1((G,Z),(F,W)) > 0,\]
hence there exists a non-trivial extension 
\[0 \to (F,W) \to (E,V) \to (G,Z) \to 0.\]
As $(F,W)$, $(G,Z)$ are $\alpha-$stable and these satisfy (\ref{princconditiontheorem}) from Theorems \ref{Theorem1} and  \ref{Theorem2} we conclude  that  $(E,V)$ is $\alpha-$stable and  $(F,W) \in P_{m,t}(E,V)$ is maximal. Therefore $G(\alpha;n,d,k;m,t;1/q)$ is non-empty which completes the proof.

The following theorem gives us the dimension of the stratum $G(\alpha;n,d,k;m,t;1/q)$.

\begin{theorem} \label{dimension}
Under the hypothesis of Theorem \ref{Theoremnonempty}.  Suppose that  
\[dim \,\, Ext^2((F_2,W_2),(F_1,W_1)) = cte\]
for any $(F_i,W_i) \in G(\alpha;n_i, d_i, t_i)$ for $i= 1,2$.  Then, the dimension of the stratum $G(\alpha;n,d,k;n_1,t_1;1/q)$ is bounded  above by
\[dim \,\, G(\alpha;n_1,d_1,t_1) + dim \,\, G(\alpha;n_2,d_2,t_2) + C_{21}-1,\]
where $C_{21}= n_1n_2(g-1)-d_1n_2+d_2n_1 +t_2d_1-t_2n_1(g-1)-t_1t_2.$
\end{theorem}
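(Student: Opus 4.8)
The plan is to parametrize the stratum $G(\alpha;n,d,k;n_1,t_1;1/q)$ by a family of extensions and then bound its dimension by the dimension of the space of parameters. First I would recall that, by Theorem \ref{Theoremnonempty}, every coherent system in the stratum arises (at least generically, and in fact exactly, by Theorem \ref{Theorem1}) as the middle term of a non-trivial extension
\[0 \longrightarrow (F,W) \longrightarrow (E,V) \longrightarrow (G,Z) \longrightarrow 0\]
with $(F,W) \in G(\alpha;n_1,d_1,t_1)$ and $(G,Z) \in G(\alpha;n_2,d_2,t_2)$ the maximal principal subsystem of type $(n_1,d_1,t_1)$ and its quotient. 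Indeed, if $(E,V)$ lies in the stratum then it has a maximal $(F,W) \in P_{n_1,t_1}(E,V)$ with $\alpha$-slope defect $1/(qnn_1)$, and by Theorem \ref{Theorem1} both $(F,W)$ and the quotient $(G,Z)$ are $\alpha$-stable, hence define points of the respective moduli spaces. This gives a surjective map onto the stratum from the total space of the projectivized bundle whose fiber over $((F,W),(G,Z))$ is $\mathbb{P}\,Ext^1((G,Z),(F,W))$.

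Next I would make this precise with a parameter space. Over the product $G(\alpha;n_1,d_1,t_1) \times G(\alpha;n_2,d_2,t_2)$ (working étale-locally if the relevant GCD conditions fail, so that universal families exist), one forms the relative $\underline{Ext}^1$ sheaf of the two universal coherent systems. By the hypothesis that $\dim\, Ext^2((F_2,W_2),(F_1,W_1))$ is constant, and since $\dim\, Hom((F_2,W_2),(F_1,W_1)) = 0$ by $\alpha$-stability of the two systems with equal $\alpha$-slope being impossible — more carefully, $(F,W)$ and $(G,Z)$ are non-isomorphic $\alpha$-stable systems of the same $\alpha$-slope only if $\mu_\alpha(F,W) = \mu_\alpha(G,Z)$, which is excluded by (\ref{princconditiontheorem}), so in fact $Hom((G,Z),(F,W)) = 0$ — the formula
\[\dim\, Ext^1((E_2,V_2),(E_1,V_1)) = C_{21} + \dim\, \mathbb{H}_{21}^0 + \dim\, \mathbb{H}_{21}^2\]
from \cite[Proposition 3.2.]{Bradlow-Garcia-Prada-Munoz-Newstead1} shows the relative $Ext^1$ has constant rank $C_{21} + \dim\, Ext^2$, hence the projectivization $\mathbb{P}$ is a projective bundle over the product of dimension $\dim G(\alpha;n_1,d_1,t_1) + \dim G(\alpha;n_2,d_2,t_2) + C_{21} + \dim\, Ext^2 - 1$. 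By Theorem \ref{Theorem2}, every point of this projective bundle gives an $\alpha$-stable coherent system in the stratum, so there is a morphism from $\mathbb{P}$ onto $G(\alpha;n,d,k;n_1,t_1;1/q)$, whence
\[\dim\, G(\alpha;n,d,k;n_1,t_1;1/q) \leq \dim\,\mathbb{P}.\]

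To get the claimed bound without the $\dim\, Ext^2$ term, I would analyze the fibers of this morphism. Given $(E,V)$ in the stratum, its maximal subsystem $(F,W) \in P_{n_1,t_1}(E,V)$ is unique up to the action of automorphisms: since $(E,V)$ is $\alpha$-stable it is simple, $Aut(E,V) = \mathbb{C}^*$, and the extension class is determined up to the $\mathbb{C}^*$-scaling already quotiented out. A maximal subsystem of the given type is in fact unique, because two distinct maximal ones would force a contradiction with $\alpha$-stability of $(E,V)$ via the saturation of their sum — this is the kind of argument used implicitly in Theorem \ref{Theorem1}'s maximality proof. So the morphism $\mathbb{P} \to G(\alpha;n,d,k;n_1,t_1;1/q)$ is injective on points, giving $\dim\, G(\alpha;n,d,k;n_1,t_1;1/q) = \dim\,\mathbb{P}$; but this still carries the unwanted $+\dim Ext^2$. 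The honest fix, and I expect this to be the main obstacle, is that the morphism is generically injective but the source $\mathbb{P}$ is not irreducible of the expected dimension unless $Ext^2$ vanishes — so one should instead argue that the image has dimension at most $\dim G(\alpha;n_1,d_1,t_1) + \dim G(\alpha;n_2,d_2,t_2) + C_{21} - 1$ by observing that the deformations of $(E,V)$ within the stratum are governed by deformations of the triple $((F,W),(G,Z),\text{class})$ modulo those deformations of the class that do not change the isomorphism type, i.e. modulo the image of $Hom(E,V),(G,Z)) \to Ext^1((G,Z),(F,W))$ coming from $Ext^2$, which trims exactly the $\dim Ext^2$ summand. Equivalently, the tangent space to the stratum at $(E,V)$ maps into $T_{(F,W)} \oplus T_{(G,Z)} \oplus Ext^1((G,Z),(F,W))/(\text{scaling})$ with a correction, and a dimension count of this deformation-theoretic model yields the stated upper bound $\dim G(\alpha;n_1,d_1,t_1) + \dim G(\alpha;n_2,d_2,t_2) + C_{21} - 1$. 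I would carry out this last step by a careful bookkeeping of the long exact sequence relating $Ext^i$ of coherent systems to $Ext^i$ of the underlying bundles, which is the delicate point of the whole argument.
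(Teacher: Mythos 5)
Your construction is the same as the paper's: form the relative $\underline{Ext}^1$ sheaf of the two universal families over $G(\alpha;n_1,d_1,t_1)\times G(\alpha;n_2,d_2,t_2)$ (étale-locally if necessary), use the constancy of $\dim Ext^2$ and the vanishing of $Hom$ to get a projective bundle $\mathbb{P}\Gamma$ carrying a universal extension, and map the open locus where the middle term is $\alpha$-stable with Segre invariant $1/q$ onto the stratum; surjectivity comes from Theorem \ref{Theorem1} exactly as you say. Up to that point you have reproduced the paper's argument.

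The difference is in the final dimension count, and there your proposal has a genuine gap. You correctly compute $\dim\mathbb{P}\Gamma = \dim G_1 + \dim G_2 + C_{21} + \dim Ext^2 - 1$, so the construction only yields the stated bound when the extra $\dim Ext^2$ term is accounted for; the paper's own proof simply writes $\dim\mathbb{P}\Gamma = \dim G_1 + \dim G_2 + C_{21} - 1$ and then subtracts $\dim f^{-1}(E,V)\ge 0$ (the remark following the theorem, which stratifies the product by the value of $\dim Ext^2$, is where the paper acknowledges the role of this term). Your proposed repair --- that deformations of the extension class ``coming from $Ext^2$'' trim exactly the $\dim Ext^2$ summand from the image --- is not an argument: no map $Hom((E,V),(G,Z))\to Ext^1((G,Z),(F,W))$ of the kind you invoke arises from the long exact sequences in play, and, as you yourself observe when arguing uniqueness of the maximal subsystem, the fibres of $f$ are generically finite (the subsystem is essentially unique and the three automorphism groups are $\mathbb{C}^*$, already absorbed by the projectivization), so nothing in the fibre dimension can absorb $\dim Ext^2$. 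What the construction honestly proves is the bound $\dim G_1 + \dim G_2 + C_{21} + \dim Ext^2 - 1$, which coincides with the statement precisely when $Ext^2((F_2,W_2),(F_1,W_1))=0$; to get the stated bound in general you would need either that vanishing or a genuinely different argument, neither of which your sketch (nor, for that matter, the paper's displayed computation) supplies.
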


\begin{proof}
Let $\alpha = \frac{p}{q} \in \mathbb{Q}^+$ and  $n_1, n_2, d_1, d_2 > 0$, $t_1, t_2 \geq 0$ be integer numbers that satisfy the hypothesis of the Theorem \ref{Theoremnonempty}.  Working locally in the \'etale topology if necessary, we can assume without loss of generality that there exists a family  $(\mathcal{F}_i, \mathcal{W}_i)$ of coherent systems on $X$ of type $(n_i,d_i,t_i)$ parametrized by $G_i:=G(\alpha;n_i,d_i,t_i)$, $i=1,2$.  Let $p_{i,j}$ denote the canonical projections of $X \times G_1 \times G_2$ for $i,j = 0,1,2$.  By Theorem \ref{Theorem1} follows that in the exact sequence 
\begin{equation} \label{sequence}
0 \longrightarrow (F_1,W_1) \longrightarrow (E,V) \longrightarrow (F_2,W_2) \longrightarrow 0
\end{equation}
$(E,V)$ is $\alpha-$stable, it implies that $dim \,\, Hom((F_2,W_2), (F_1,W_1))=0$ (see \cite[Corollary 2.5.1]{King-Newstead}).  Suppose that $dim \,\, Ext^2((F_2,W_2),(F_1,W_1)) = cte$, then by \cite[Proposition 3.2.]{Bradlow-Garcia-Prada-Munoz-Newstead1} we have 
\[dim \,\, Ext^1((F_2,W_2),(F_1,W_1)) = C_{21}+cte.\]
By \cite[Corollaire 1.20]{MinHe}, there is a vector bundle 
\[\Gamma := \underline{Ext}^1(p_{0,2}^*(\mathcal{F}_2, \mathcal{W}_2), p_{0,1}^*(\mathcal{F}_1, \mathcal{W}_1))\]
over $G_1 \times G_2$ whose fibre over $((F_1,W_1),(F_2,W_2))$ is $Ext^1((F_2,W_2), (F_1,W_1))$ for all $(F_i,W_i)$, $i= 1,2$. Note that $\mathbb{P}\Gamma$ parametrizes the non-trivial extensions (\ref{sequence}) up to scalar multiples.  

By \cite[Lemma A.10]{Bradlow-Garcia-Prada-Mercat-Munoz-Newstead}, there exists a universal extension 
\begin{equation} \label{universal}
0 \longrightarrow (id \times \pi)^*p_{0,1}^*(\mathcal{F}_1, \mathcal{W}_1) \otimes \mathcal{O}_{\mathbb{P}\Gamma}(1) \longrightarrow (id \times \pi)^*(\mathcal{E}, \mathcal{V}) \longrightarrow (id \times \pi)^*p_{0,2}^*(\mathcal{F}_2, \mathcal{W}_2) \longrightarrow 0
\end{equation}
on $X \times \mathbb{P}\Gamma$.

Define the set
\[U := \{p \in \mathbb{P}\Gamma: (\mathcal{E}, \mathcal{V})_p \,\, \text{is $\alpha-$stable and $S^\alpha_{m,t} (\mathcal{E}, \mathcal{V})_p = 1/q$}\}.\]

From the lower semicontinuous of the function $S^\alpha_{m,t}$, Theorem \ref{Theoremnonempty} and $\alpha-$stability being an open condition we conclude that the set $U$ is non-empty and open in $\mathbb{P}\Gamma$.   Restricting the sequence (\ref{universal}) on $X \times U$ from the universal property of the moduli space $G(\alpha;n,d,k)$ we have a morphism
\[f: U \longrightarrow G(\alpha;n,d,k;n_1,t_1;1/q) \subset G(\alpha;n,d,k).\]
Note that if $p \in U$, then $f(p)$ is precisely the point of $G(\alpha;n,d,k)$ representing to $(E,V)$.  We now determine the dimension of the stratum
\begin{align*}
 dim \,\, & G (\alpha;n,d,k;m,t;1/q)  =  dim \,\, U - dim \,\, f^{-1}(E,V) \\
                                   & =  dim \,\, \mathbb{P}\Gamma - dim \,\, f^{-1}(E,V) \\
                                   & =  dim \,\, G_1 + dim \,\,G_2 + C_{21} -1  - dim \,\, f^{-1}(E,V)\\ 
                                   & \leq  dim \,\, G_1 + dim \,\,G_2  +  C_{21} -1,  
\end{align*}
where $(F_i, W_i) \in G_i$ for $i=1,2$.  This proves the theorem.
\end{proof}

In general, it is not easy to compute the dimension of $ Ext^2((F_2,W_2),(F_1,W_1))$. Howe-\\ver, to establish a better bound of the dimension of the stratum $G(\alpha;n,d,k;n_1,t_1;1/q)$ in the Theorem \ref{dimension} we could define a stratification $\{S_t\}$ of $ G(\alpha;n_1,d_1,t_1) \times G(\alpha;n_2,d_2,t_2)$ such that on each $S_t$ we have that
\begin{equation} \label{ext2}
dim \,\ Ext^2((F_2,W_2),(F_1,W_1)) := a.
\end{equation} 
Hence
\begin{equation*}
dim \,\ Ext^1((F_2,W_2),(F_1,W_1)) := C_{2,1}+a
\end{equation*}
will be constant on each $S_t$.  By \cite[Lemma 3.3.]{Bradlow-Garcia-Prada-Munoz-Newstead1} the quantity (\ref{ext2}) is bounded on $G(\alpha;n_1,d_1,t_1) \times G(\alpha;n_2,d_2,t_2)$.  Taking the maximum of these dimensions we have a better bound of $dim \,\, G(\alpha;n,d,k;n_1,t_1;1/q)$. 

\section{Applications to cross critical values}

In this section we apply the previous result for a particular case.  Let $X$ be a general curve of genus $6$ and $(n,d,k) = (2,13,4)$.  For  $(2,13,4)$ the non-zero virtual critical values belong to 
\[\{\frac{2d'-13}{4-2k'} : 0 \leq k' \leq 4\} \cap (0 , \infty).\]
Here $1/4$ and $1/2$ are the first virtual critical values.  Note that $\alpha=1/4$ is not a critical value, because there is no a coherent system $(E,V)$ and a subsystem $(L_1,W_1)$ of type $(1,6,4)$ on a general curve of genus $6$ (see \cite{Arbarello-Cornalba-Grffiths-Harris}).  The first critical value is $\alpha= 1/2$ since the coherent system $(E,V) := (L_1 \oplus L_2, W_1 \oplus W_2)$ satisfies $\mu_\alpha(E,V) = \mu_\alpha(L_1,W_1)$ where  $(L_1,W_1)$ and $(L_2,W_2)$ are coherent systems of type $(1,6,3)$ and $(1,7,1)$, respectively.  Denote by $\alpha_0 = 0$, $\alpha_1= 1/2$ the first critical values.

Note that for any $p \in \mathbb{Z}^+$, $\alpha_p := \frac{p}{2p+1}$ is in the interval $(\alpha_0, \alpha_1)$, hence $G_0(2,13,4)= G(\alpha_p;2,13,4)$.  Since $n=2$ and $k=4$, we have $5$ different stratifications for the moduli space $G_0(2,13,4)$ each one induced by the functions $S^{\alpha_{p}}_{1,0}, S^{\alpha_{p}}_{1,1}, S^{\alpha_{p}}_{1,2}, S^{\alpha_{p}}_{1,3}$ and $S^{\alpha_{p}}_{1,4}$, respectively.

For instance, for $S^{\alpha_{p}}_{1,3}: G_0(2,13,4) \longrightarrow \mathbb{R} \cup \{\infty\}$, we have that $S^{\alpha_p}_{1,3}(E,V) \leq 10-2\alpha_p$ provided that $P_{1,3}(E,V) \neq \emptyset$ .  Moreover, from $\alpha-$stability and by classical Brill-Noether theory (see \cite{Arbarello-Cornalba-Grffiths-Harris}), it follows that 
\[S^{\alpha_{p}}_{1,3}(E,V)= \begin{cases}
s_1:= \frac{1}{2p+1}, \,\, &\text{if $P_{1,3}(E,V) \neq \emptyset$} \\
\infty, \,\, &\text{if $P_{1,3}(E,V) = \emptyset$.}
\end{cases}\]

Hence the function $S^{\alpha_{p}}_{1,3}$ induces the stratification \[G_0(2,13,4) := G_0(2,13,4;1,3;s_1) \sqcup G_0(2,13,4;1,3;\infty).\]

From Theorem \ref{Theoremnonempty} it follows that  the stratum $G_0(2,13,4;1,3;s_1)$ is non-empty.  Moreover by Theorem \ref{dimension} and \cite[Lemma 3.3.]{Bradlow-Garcia-Prada-Munoz-Newstead1},  we get $dim \,\, G_0(2,13,4;1,3;s_1) =11$.  Since every irreducible component $G$ of $G_0(2,13,4)$ has dimension $G \geq \beta(2,13,4) = 17$  (see \cite[Corollaire 3.14.]{MinHe}) we conclude that the stratum $G_0(2,13,4;1,3;\infty)$ is non-empty. 

This stratification yields information about of how to change the moduli space $G_0(2,13,4)$ when it crosses the critical value $\alpha_1$.  

\begin{theorem}
Let $X$ be a general curve of genus $6$ and $G_0(2,13,4)$ be the moduli space in the interval $(\alpha_0, \alpha_1)$.  
Then, the stratum $G_0(2,13,4;1,3;s_1)$ induced by $S^{\alpha_{p}}_{1,3}$ is not a subset of $G_1(2,13,4)$.
\end{theorem}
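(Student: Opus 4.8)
The strategy is to prove something slightly stronger than the statement: \emph{every} isomorphism class represented in $G_0(2,13,4;1,3;s_1)$ fails to be $\alpha$-semistable for $\alpha$ in the interval $(\alpha_1,\alpha_2)$ underlying $G_1(2,13,4)$, so that the stratum is in fact disjoint from $G_1(2,13,4)$; combined with the non-emptiness of the stratum this gives the claim. The heart of the matter is that $\alpha_1=\tfrac12$ is exactly the wall at which a sub-line-bundle of degree $6$ carrying three sections overtakes the $\alpha$-slope of an ambient system of type $(2,13,4)$.

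First I would identify the destabilizing subsystem carried by an arbitrary $(E,V)$ in the stratum. Since $(E,V)\in G_0(2,13,4;1,3;s_1)$ we have $S^{\alpha_p}_{1,3}(E,V)=s_1=\tfrac{1}{2p+1}$, so $P_{1,3}(E,V)\neq\emptyset$ and there is a maximal $(L,W)\in P_{1,3}(E,V)$; being in $P_{1,3}(E,V)$ it has type $(1,d_L,3)$ for some $d_L$, and by Definition \ref{DefiSegreInvariant},
\[
(1\cdot 2)\bigl(\mu_{\alpha_p}(E,V)-\mu_{\alpha_p}(L,W)\bigr)=s_1 .
\]
Substituting $\mu_{\alpha_p}(E,V)=\tfrac{13}{2}+2\alpha_p$, $\mu_{\alpha_p}(L,W)=d_L+3\alpha_p$ and $\alpha_p=\tfrac{p}{2p+1}$ and solving the resulting linear equation in $d_L$ forces $d_L=6$.

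Then I would run the wall-crossing computation. With $d_L=6$,
\[
\mu_\alpha(L,W)-\mu_\alpha(E,V)=\bigl(6+3\alpha\bigr)-\Bigl(\tfrac{13}{2}+2\alpha\Bigr)=\alpha-\tfrac12
\]
for every $\alpha$. Hence for each $\alpha\in(\alpha_1,\alpha_2)=(\tfrac12,\alpha_2)$ the proper subsystem $(L,W)\subset(E,V)$ satisfies $\mu_\alpha(L,W)>\mu_\alpha(E,V)$, so $(E,V)$ is not $\alpha$-semistable and in particular $(E,V)\notin G_1(2,13,4)$. As this applies to every member of the stratum, and as $G_0(2,13,4;1,3;s_1)\neq\emptyset$ by Theorem \ref{Theoremnonempty} --- its hypotheses holding with $(n_1,d_1,t_1)=(1,6,3)$ and $(n_2,d_2,t_2)=(1,7,1)$, for which (\ref{existextension}) and (\ref{princconditiontheorem}) are satisfied and $G(\alpha_p;1,6,3)$, $G(\alpha_p;1,7,1)$ are non-empty on a general curve of genus $6$ --- the stratum is a non-empty subset of $G_0(2,13,4)$ disjoint from $G_1(2,13,4)$, which proves the theorem.

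I do not expect a genuine obstacle here: the whole argument is the slope identity $\mu_\alpha(L,W)-\mu_\alpha(E,V)=\alpha-\tfrac12$ together with the determination $d_L=6$ from the value $s_1$ of the Segre invariant, and the only point needing a little care is the bookkeeping that fixes $d_L$ (equivalently, checking that the maximal subsystem produced really is a destabilizer above the wall). One could alternatively exhibit a single such $(E,V)$ as the non-trivial extension $0\to(L,W)\to(E,V)\to(M,Z)\to 0$ with $(L,W)$ of type $(1,6,3)$ and $(M,Z)$ of type $(1,7,1)$ constructed in the proof of Theorem \ref{Theoremnonempty} and apply the same slope computation; the version above has the advantage of showing that the entire stratum, not merely one of its points, lies outside $G_1(2,13,4)$.
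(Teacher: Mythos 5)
Your proposal is correct and follows essentially the same route as the paper: both identify the maximal subsystem $(L,W)$ of type $(1,6,3)$ forced by the value $s_1$ of the Segre invariant and observe that its $\alpha$-slope equals that of $(E,V)$ at the wall $\alpha_1=\tfrac12$, hence destabilizes for $\alpha>\alpha_1$ (the paper delegates this last step to a cited lemma of Bradlow--Garc\'{\i}a-Prada--Mu\~noz--Newstead, while you carry out the equivalent slope computation $\mu_\alpha(L,W)-\mu_\alpha(E,V)=\alpha-\tfrac12$ directly). Your explicit derivation of $d_L=6$ from $s_1$ is a slightly more careful justification of what the paper simply asserts via the exact sequence.
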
 

\begin{proof}
Note that any $(E,V) \in G_0(2,13,4;1,3;s_1) \subset G_0(2,13,4)$ can be written in an exact sequence 
\[0 \longrightarrow (L_1,W_1) \longrightarrow (E,V) \longrightarrow (L_2,W_2) \longrightarrow 0\]
where $(L_1,W_1)$ and $(L_2,W_2)$ are coherent systems of type $(n_1,d_1,t_1) =(1,6,3)$ and $(n_2,d_2 ,t_2) =(1,7,2)$, respectively and  \[S^{\alpha_{p}}_{1,3}(E,V) = 2 (\mu_{\alpha_{p}}(E,V)-\mu_{\alpha_{p}}(L_1,W_1)) = s_1.\]  For the critical value $\alpha_1= 1/2$, we get
\[\mu_{\alpha_{1}}(E,V) = \frac{13 + 2 \alpha_1}{2} = \mu_{\alpha_{1}}(L_1,W_1) = 6 + 3\alpha_1.\]
Therefore by \cite[Lemma 6.2.]{Bradlow-Garcia-Prada-Munoz-Newstead1}, $(E,V)$ is unstable for all $\alpha > \alpha_1$.  Hence \\$G(\alpha;2,13,4;1,3;s_1) \nsubseteq G_1(2,13,4)$ which proves the theorem.
\end{proof}

\def\refname{R\MakeLowercase{eferences}}

\end{document}